\newcommand{\R}{\mathbb{R}}
\newcommand{\C}{\mathbb{C}}
\newcommand{\N}{\mathbb{N}}
\renewcommand{\Re}{\operatorname{Re}}
\newcommand{\vep}{\varepsilon}
\DeclareMathOperator{\dive}{div}
\newtheorem{thm}{Theorem}[section]
\newtheorem{lem}[thm]{Lemma}
\theoremstyle{definition}
\numberwithin{equation}{section}
\author[A. Biswas]{Animesh Biswas}
\address{Department of Mathematics \\
University of Nebraska-Lincoln \\
210 Avery Hall, Lincoln \\
NE 68588, United States of America}
\email{abiswas2@unl.edu}
\author[P. R. Stinga]{Pablo Ra\'ul Stinga}
\address{Department of Mathematics \\
Iowa State University \\
396 Carver Hall, Ames \\
IA 50011, United States of America}
\email{stinga@iastate.edu}
\keywords{Fractional power operators, semigroups of linear operators, extension problem characterization, uniqueness}
\subjclass[2010]{Primary: 35R11, 47D06. Secondary: 26A33, 35A02, 35C15}
\thanks{Research partially supported by Simons Foundation grant 580911.}
\begin{document}

%%%%%%%%%%%%%%%%%%%%%%%%%%%%%%%%%%%%%%%%%%%%%%%%%%%%%%
\title[Sharp extension problems in Banach spaces]{Sharp extension problem characterizations for higher fractional power operators in Banach spaces}
%%%%%%%%%%%%%%%%%%%%%%%%%%%%%%%%%%%%%%%%%%%%%%%%%%%%%%

%%%%%%%%%%%%%%%%%%%%%%%%%%%%%%%%%%%%%%%%%%%%%%%%%%%%%%
\begin{abstract}
We prove sharp characterizations of higher order fractional powers $(-L)^s$, where $s>0$ is noninteger, of
generators $L$ of uniformly bounded $C_0$-semigroups on Banach spaces via extension problems,
which in particular include results of Caffarelli--Silvestre, Stinga--Torrea and Gal\'e--Miana--Stinga when $0<s<1$.
More precisely, we prove existence and uniqueness of solutions $U(y)$, $y\geq0$, to initial value problems
for both higher order and second order extension problems
and characterizations of $(-L)^su$, $s>0$, in terms of boundary derivatives of $U$ at $y=0$, under the sharp
hypothesis that $u$ is in the domain of $(-L)^s$. Our results resolve the question of setting up the correct initial conditions
that guarantee well-posedness of both extension problems. Furthermore, we discover new explicit
subordination formulas for the solution $U$ in terms of the semigroup $\{e^{tL}\}_{t\geq0}$ generated by $L$.
\end{abstract}
%%%%%%%%%%%%%%%%%%%%%%%%%%%%%%%%%%%%%%%%%%%%%%%%%%%%%%

\maketitle

%%%%%%%%%%%%%%%%%%%%%%%%%%%%%%%%%%%%%%%%%%%%%%%%%%%%%%
\section{Introduction}
%%%%%%%%%%%%%%%%%%%%%%%%%%%%%%%%%%%%%%%%%%%%%%%%%%%%%%

Extension problem characterizations of fractional powers of linear operators
\cite{Caffarelli-Silvestre,Gale-Miana-Stinga, Stinga-Torrea} are powerful tools in the study of nonlocal fractional equation problems
in analysis, PDEs, geometry, fractional calculus, mathematical finance, continuum mechanics,
numerical analysis and computational mathematics, among other areas.
The extension problem characterization for the fractional Laplacian $(-L)^s=(-\Delta)^s$ in $\R^n$,
for $0<s<1$, was introduced by Caffarelli and Silvestre \cite{Caffarelli-Silvestre} in 2007. In 2010, 
Stinga and Torrea developed in \cite{Stinga-Torrea} the method of semigroups and generalized the extension problem
characterization to fractional powers $(-L)^s$, $0<s<1$, of any nonnegative normal operator $-L$ in Hilbert spaces.
The most general extension description was proved in 2013 by Gal\'e, Miana and Stinga, see \cite{Gale-Miana-Stinga}.
The result in \cite{Gale-Miana-Stinga} is established through the method of semigroups for infinitesimal generators $L$ of
tempered $\alpha$-times integrated semigroups on Banach spaces $X$, for $\alpha\geq0$. In particular, it applies to
generators $L$ of uniformly bounded $C_0$-semigroups $\{e^{tL}\}_{t\geq0}$ on $X$ and,
in this case, it states the following. Given $u\in X$ and $0<s<1$, define
\begin{equation}\label{eq:Usemigroup}
U(y)=U[u](y)=\frac{y^{2s}}{4^s\Gamma(s)}\int_0^\infty e^{-y^2/(4t)}e^{tL}u\,\frac{dt}{t^{1+s}}\qquad y>0.
\end{equation}
Then $U$ is a bounded classical solution to the $X$-valued extension problem
\begin{equation}\label{eq:extension_problem_X}
\begin{cases}
LU+\frac{1-2s}{y}\partial_yU+\partial_{yy}U=0&\hbox{for}~y>0\\
\lim_{y\to0}U(y)=u&\hbox{in}~X.
\end{cases}
\end{equation}
If, in addition, $u\in D(L)$ (the domain of $L$ in $X$) then
\begin{equation}\label{eq:normalderivative}
-\lim_{y \to 0}y^{1-2s}\partial_yU(y)=c_s(-L)^su\qquad\hbox{in}~X
\end{equation}
where $c_s>0$ is a constant explicitly computed in \cite{Stinga-Torrea} that depends only on $s$.
In fact, $U$ given by \eqref{eq:Usemigroup} is the unique bounded classical solution to \eqref{eq:extension_problem_X}
and if $u\in D((-L)^s)$, for $0<s<1$, then \eqref{eq:normalderivative} still holds, see \cite{Meichsner-Seifert}.
Formula \eqref{eq:Usemigroup} was first discovered in \cite{Stinga-Torrea}.

In this paper we prove sharp characterizations for all fractional powers $(-L)^s$,
where $s>0$ is noninteger, with both higher order and second order extension problems, see 
\eqref{eq:uniqueness s greater 2} and \eqref{eq:bvpinyalls}, respectively. 
We show existence and uniqueness of the classical solution to the these
extension problems and the characterizations of $(-L)^su$ as certain derivatives of $U$ at $y=0$ under the sharp
hypothesis that $u\in D((-L)^s)$, see \eqref{eq:introNeumannL} and \eqref{eq:introNeumanny}. In particular, we set up the correct initial
conditions for well-posedness of the problems. Furthermore, we find new explicit representations of the solution $U$
in the form of subordination formulas involving the semigroup $\{e^{tL}\}_{t\geq0}$, see \eqref{eq:U component s Greater 2}.
Our main result is the following (see Section \ref{sec:balakrishnan} for notation).

\begin{thm}[Extension problems for any noninteger $s>0$]\label{thm:extension-general}
Let $L$ be the infinitesimal generator of a uniformly bounded $C_0$-semigroup $\{e^{tL}\}_{t\geq0}$ on a Banach space $X$.
Assume that $0\in\rho(L)$, the resolvent set of $L$.
Let $s>0$ be any noninteger and let $[s]$ be the integer part of $s$.
Given any $u\in X$, let $U(y)=U[u](y)$ be as in \eqref{eq:Usemigroup}.
The following statements hold.
\begin{enumerate}[$(a)$]
\item For any $k\geq0$, $U\in C^\infty((0,\infty);D(L^k))\cap C([0,\infty);X)$
and $U$ is a bounded classical solution to the higher order extension problem
\begin{equation}\label{eq:extension_problem_s_greater1}
\begin{cases}
\big(L+\frac{1-2(s-[s])}{y}\partial_y+\partial_{yy}\big)^{[s]+1}U=0&\hbox{for}~y>0\\
\lim_{y\to0}U(y)=u&\hbox{in}~X
\end{cases}
\end{equation}
and to the second order extension problem
\begin{equation}\label{eq:introextension}
\begin{cases}
LU+\frac{1-2s}{y}\partial_yU+\partial_{yy}U=0&\hbox{for}~y>0\\
\lim_{y\to0}U(y)=u&\hbox{in}~X.
\end{cases}
\end{equation}
\item If $f\in X$ and $u\in D((-L)^s)$ is the unique solution to $(-L)^su=f$ then we have
\begin{equation}\label{eq:U component s Greater 2}
\begin{aligned}
U(y) &= \sum^{[s]}_{k=0} \frac{y^{2k}\Gamma(s-k)}{4^kk!\Gamma(s)}L^ku
+\frac{1}{\Gamma(s)} \int^\infty_0\bigg[e^{-y^2/(4t)}- \sum^{[s]}_{k=0} \frac{(-1)^ky^{2k}}{k!(4t)^k}\bigg] e^{tL}f\,\frac{dt}{t^{1-s}} \\
&= \sum^{[s]}_{k=0} \frac{y^{2k}\Gamma(s-k)}{4^kk!\Gamma(s)}L^ku
+\frac{y^{2s}}{4^s\Gamma(s)}\int^\infty_0\bigg[e^{-r}-\sum^{[s]}_{k=0} \frac{(-r)^k}{k!}\bigg]e^{\frac{y^2}{4r}L}f\,\frac{dr}{r^{1+s}}.
\end{aligned}
\end{equation}
\item Furthermore, $u\in D((-L)^s)$ if and only if the limits
$$\lim_{y\to0}y^{1-2(s-[s])}\partial_y\big(L+\tfrac{1-2(s-[s])}{y}\partial_y+\partial_{yy}\big)^{[s]}U(y)$$
or
\begin{equation}\label{eq:U Ls Greater 2}
\lim_{y\to0}y^{1-2(s-[s])}\partial_y\big(\tfrac{2}{y}\partial_y\big)^{[s]} U(y)
\end{equation}
exist in $X$ and, in these cases,
\begin{equation}\label{eq:introNeumannL}
\lim_{y\to0}y^{1-2(s-[s])}\partial_y\big(L+\tfrac{1-2(s-[s])}{y}\partial_y+\partial_{yy}\big)^{[s]}U(y)
=c_s[s]!(-L)^su
\end{equation}
and
\begin{equation}\label{eq:introNeumanny}
\lim_{y\to0}y^{1-2(s-[s])}\partial_y\big(\tfrac{2}{y}\partial_y\big)^{[s]}U(y)
=c_s(-L)^su
\end{equation}
where $c_s=\frac{(-1)^{[s]+1}\Gamma([s]+1-s)}{4^{s-([s]+1/2)}\Gamma(s)}$.
\item If $u\in D((-L)^s)$ then $U$ is the unique classical solution to the higher order initial value
extension problem
\begin{equation}\label{eq:uniqueness s greater 2}
\begin{cases}
\big(L+\frac{1-2(s-[s])}{y}\partial_y+\partial_{yy}\big)^{[s]+1}U=0&\hbox{for}~y>0\\
\lim_{y\to0}\big(L+\frac{1-2(s-[s])}{y}\partial_y+\partial_{yy}\big)^mU =\frac{[s]!\Gamma(s-m)}{([s]-m)!\Gamma(s)}
L^mu&\hbox{for}~0\leq m\leq [s] \\
\lim_{y\to0}y^{1-2(s-[s])}\partial_y \big(L+\frac{1-2(s-[s])}{y}\partial_y+\partial_{yy}\big)^mU=0&\hbox{for}~0\leq m<[s] \\
\lim_{y\to0}y^{1-2(s-[s])}\partial_y \big(L+\frac{1-2(s-[s])}{y}\partial_y+\partial_{yy}\big)^{[s]}U=c_s[s]!(-L)^su,
\end{cases}
\end{equation}
and to the second order initial value extension problem
\begin{equation}\label{eq:bvpinyalls}
\begin{cases}
LU+\frac{1-2s}{y}\partial_yU+\partial_{yy}U=0&\hbox{for}~y>0\\
\lim_{y\to0}\big(\frac{2}{y}\partial_y\big)^mU(y) =\frac{\Gamma(s-m)}{\Gamma(s)}L^mu&\hbox{for}~0\leq m\leq [s] \\
\lim_{y\to0}y^{1-2(s-[s])}\partial_y \big(\frac{2}{y}\partial_y\big)^mU(y)=0&\hbox{for}~0\leq m<[s] \\
\lim_{y\to0}y^{1-2(s-[s])}\partial_y \big(\frac{2}{y}\partial_y\big)^{[s]}U(y)=c_s(-L)^su.
\end{cases}
\end{equation}
\end{enumerate}
\end{thm}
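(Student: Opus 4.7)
My plan reduces the general noninteger case $s>0$ to the subcritical base case $0<\sigma<1$ (with $\sigma:=s-[s]$) which is already covered by \cite{Stinga-Torrea,Gale-Miana-Stinga,Meichsner-Seifert}. The two main engines are the reparametrization
\begin{equation*}
U_s[u](y)=\frac{1}{\Gamma(s)}\int_0^\infty r^{s-1}e^{-r}e^{(y^2/(4r))L}u\,dr
\end{equation*}
of \eqref{eq:Usemigroup} via $r=y^2/(4t)$, together with the explicit intertwining identity
\begin{equation*}
\mathcal{B}_\sigma U_s[u]=\frac{s-\sigma}{s-1}\,U_{s-1}[Lu],\qquad\mathcal{B}_\sigma:=L+\tfrac{1-2\sigma}{y}\partial_y+\partial_{yy},
\end{equation*}
where $U_s[u]$ is the extension of order $s$ applied to $u$. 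This identity is obtained by a direct computation of $y$-derivatives under the integral, the observation $\frac{y^2}{4r^2}L\,e^{(y^2/(4r))L}=-\partial_r e^{(y^2/(4r))L}$, and one integration by parts in $r$.

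\textbf{Part (a).} The reparametrized formula immediately yields $\|U(y)\|\leq M\|u\|$ and, by dominated convergence combined with the strong continuity $e^{tL}u\to u$, shows $\lim_{y\to0}U(y)=u$ in $X$. Smoothness in $y>0$ comes from the Gaussian decay in the original kernel of \eqref{eq:Usemigroup}; membership $U(y)\in D(L^k)$ for every $k\geq0$ is obtained by moving derivatives off the kernel onto the semigroup through integration by parts in $t$, using $Le^{tL}u=\frac{d}{dt}e^{tL}u$ and closedness of $L$. The second-order PDE \eqref{eq:introextension} is precisely the case $\sigma=s$ of the intertwining identity, whose right-hand side vanishes since $s-\sigma=0$. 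For the higher-order PDE \eqref{eq:extension_problem_s_greater1}, I iterate the intertwining identity $[s]$ times (consistent with fixed $\sigma$ since $[s-j]=[s]-j$ for $0\leq j\leq[s]$) to obtain
\begin{equation*}
\mathcal{B}_\sigma^{[s]}U_s[u]=\frac{[s]!\,\Gamma(\sigma)}{\Gamma(s)}\,U_\sigma[L^{[s]}u],
\end{equation*}
which one additional application of $\mathcal{B}_\sigma$ annihilates by the classical $0<\sigma<1$ result.

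\textbf{Parts (b) and (c).} For (b), writing $e^{-y^2/(4t)}=\sum_{k=0}^{[s]}\frac{(-1)^ky^{2k}}{k!(4t)^k}+R(y,t)$ with $R=O((y^2/t)^{[s]+1})$ in \eqref{eq:Usemigroup} (with $u=(-L)^{-s}f$), the polynomial part is reorganized via the Balakrishnan identity $L^ku=\frac{(-1)^k}{\Gamma(s-k)}\int_0^\infty t^{s-k-1}e^{tL}f\,dt$ (valid for $0\leq k\leq[s]$ under $0\in\rho(L)$, see Section \ref{sec:balakrishnan}) to produce the finite sum in \eqref{eq:U component s Greater 2}, while the remainder $R$ ensures absolute convergence of the regularized integral term at both endpoints; the second equality is the change of variable $r=y^2/(4t)$. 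For (c), applying $\mathcal{B}_\sigma^{[s]}$ or $(\tfrac{2}{y}\partial_y)^{[s]}$ to \eqref{eq:U component s Greater 2}, the polynomial part collapses to a closed-form constant via $(\tfrac{2}{y}\partial_y)^jy^{2k}=4^j\frac{k!}{(k-j)!}y^{2(k-j)}$, while the integral part reduces, up to explicit constants, to the standard $\sigma$-extension of $(-1)^{[s]}(-L)^su$. Invoking the Stinga--Torrea boundary limit $\lim_{y\to0}y^{1-2\sigma}\partial_yU_\sigma[v]=-c_\sigma(-L)^\sigma v$ with $c_\sigma=\frac{\Gamma(1-\sigma)}{4^{\sigma-1/2}\Gamma(\sigma)}$ then yields \eqref{eq:introNeumannL}--\eqref{eq:introNeumanny} after constant bookkeeping, the resulting $c_s=\frac{(-1)^{[s]+1}\Gamma(1-\sigma)}{4^{\sigma-1/2}\Gamma(s)}$ matching the statement since $\Gamma(1-\sigma)=\Gamma([s]+1-s)$ and $4^{\sigma-1/2}=4^{s-[s]-1/2}$. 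The converse direction follows by running the reduction in reverse and invoking the converse in \cite{Meichsner-Seifert}.

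\textbf{Part (d) and main obstacle.} For uniqueness, if $V$ is another classical solution of \eqref{eq:uniqueness s greater 2}, then $\mathcal{B}_\sigma^{[s]}(V-U)$ is a bounded classical solution of the standard second-order $\sigma$-extension problem on $(0,\infty)$ with vanishing Dirichlet and Neumann traces; hence it vanishes identically by \cite{Meichsner-Seifert}, and descending one power of $\mathcal{B}_\sigma$ at a time recovers $V\equiv U$. The analogous argument with $(\tfrac{2}{y}\partial_y)^{[s]}$ handles \eqref{eq:bvpinyalls}. The most delicate step is the iteration of the intertwining identity in part (a): the integration by parts in $r$ produces endpoint contributions at $r=0$ and $r=\infty$ that must be controlled, and these behave differently depending on whether $s-j>1$ or $s-j<1$; the final iteration, in which the current exponent crosses $1$, requires particular care because $r^{s-j-1}$ then diverges at $r=0$. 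A closely related subtlety is the convergence at infinity of the Balakrishnan integrals in part (b), where the hypothesis $0\in\rho(L)$ is essential to provide enough decay of $e^{tL}f$ to absorb the polynomial factor $t^{s-k-1}$.
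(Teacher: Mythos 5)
Your overall architecture tracks the paper's quite closely: part (b) is the same Taylor‐expansion‐of‐the‐kernel argument combined with the resolvent/Balakrishnan representation of $L^ku$ (the paper runs it through an $\vep$-regularization, Theorems \ref{lem:approx_identity}, \ref{lem:inverse} and \ref{lem: extension_epsilon_presentation}, precisely because the individual integrals $\int_0^\infty t^{s-k-1}e^{tL}f\,dt$ need not converge absolutely; only the combined integrand with the remainder $F_{[s]}$ does), and the uniqueness in (d) is the same descent to the second order problem with parameter $1-2(s-[s])\in(-1,1)$, settled by the Banach‐space argument of \cite{Meichsner-Seifert} (Theorems \ref{lem:unique_A_unbounded} and \ref{thm:uniquenessgeneral}). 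For (a) you replace the paper's purely algebraic recursion (Lemma \ref{lem:recursion}, which converts the second order equation with parameter $1-2s$ into the higher order one and into powers of $\tfrac{2}{y}\partial_y$) by the analytic intertwining identity $\mathcal{B}_\sigma U_s[u]=\tfrac{[s]}{s-1}U_{s-1}[Lu]$. This is a legitimate and rather elegant alternative, and your constants are consistent with \eqref{eq:uniqueness s greater 2}--\eqref{eq:introNeumannL}; but as written it is only formal for general $u\in X$: the integrand $Le^{(y^2/(4r))L}u$ is not defined pointwise for a general uniformly bounded $C_0$-semigroup (one need not have $e^{tL}u\in D(L)$), so the identity must either be proved on $D(L^2)$ and extended by density and closedness of $L$, or derived by moving all derivatives onto the scalar kernel via Miana's integration by parts in $t$, as in Lemma \ref{lem:extension_equation}. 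You cite the right device to get $U(y)\in D(L^k)$ but then bypass it in the key computation; the endpoint terms you flag at $r=0,\infty$ are symptoms of this.

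The genuine gap is the converse implication in (c). ``Running the reduction in reverse and invoking the converse in \cite{Meichsner-Seifert}'' does not work: for a general $u\in X$ the function $\big(\tfrac{2}{y}\partial_y\big)^{[s]}U[u]$ is \emph{not} of the form $\text{(constant)}+cU_{\sigma}[w]$ for any fixed $w\in X$ --- that representation is available exactly when $u\in D(L^{[s]})$ (in fact you use $u\in D((-L)^s)$ to get it), which is part of what must be proved. Pre‐composing with $(-L)^{-[s]}$ to force this representation only lets the $\sigma$-converse conclude $u\in D((-L)^{\sigma})$, which is strictly weaker than $u\in D((-L)^s)$. The paper closes this with a different device: define $T_sv=\lim_{y\to0}y^{1-2(s-[s])}\partial_y\big(\tfrac{2}{y}\partial_y\big)^{[s]}U[v](y)$ on its natural domain, observe that the bounded operator $(-L)^{-s}$ commutes with $U[\cdot]$ and hence with $T_s$, and apply the already‐proved direct statement to $(-L)^{-s}u\in D((-L)^s)$ to obtain $(-L)^{-s}T_su=c_su$, i.e.\ $u\in R((-L)^{-s})=D((-L)^s)$. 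Some version of this commutation trick (or an equivalent substitute) is indispensable; without it the ``only if'' half of (c) is unproved.
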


By classical solution here we mean a solution $U(y)$ that is as continuously differentiable in $(0,\infty)$ as the equation
requires with values in the corresponding domains of $L$ and integer powers of $L$, and whose initial conditions
are satisfied with continuity up to $y=0$.

One of the main difficulties of the assumption $u\in D((-L)^s)$ in Theorem \ref{thm:extension-general} is the actual semigroup description
of the domain of $(-L)^s$. Indeed,
Berens, Butzer and Westphal proved in \cite{Berens-Butzer-Westphal} that $u\in D((-L)^s)$ if and only if,
for any integer $k>s$, the limit
$$v:=\lim_{\vep\to0}\frac{1}{c(s,k)}\int_\vep^\infty\big(e^{tL}u-u\big)^k\,\frac{dt}{t^{1+s}}$$
exists in $X$, where
$$c(s,k)=\int_0^\infty\big(e^{-t}-1\big)^k\,\frac{dt}{t^{1+s}}$$
and, in this case, $v=(-L)^su$.
In general, this limit has no explicit rate of convergence, the integral does not converge absolutely
(however, it does if $u$ is more regular, say, $u\in D((-L)^{[s]+1})$, see \cite{Gale-Miana-Stinga} where this assumption is used when $0<s<1$)
and may even oscillate, involving cancelations.
Hence we can not directly use this description in our semigroup analysis. We overcome these difficulties
thanks to our new semigroup formulas \eqref{eq:U component s Greater 2}.

Another problem that we solve and clarify is that of imposing appropriate initial conditions to \eqref{eq:extension_problem_s_greater1}
and \eqref{eq:introextension} that guarantee well-posedness of the extension problem. Indeed, \eqref{eq:extension_problem_s_greater1}
involves a $(2[s]+2)$-order $X$-valued ODE.  Thus, it is natural to impose $(2[s]+2)$ initial conditions that will ensure uniqueness of solutions.
This is indeed the case as we prove here, see \eqref{eq:uniqueness s greater 2}
and Theorem \ref{thm:uniquenessgeneral}. The case $0<s<1$ is revisited in our context in Theorem \ref{lem:unique_A_unbounded}.

On the other hand, \eqref{eq:introextension} is a second order $X$-valued ODE problem,
so one may think that imposing an extra initial Neumann condition involving $U'(y)$ at $y=0$ would suffice for uniqueness. However, we show that
the only case in which uniqueness can be achieved in general in \eqref{eq:introextension}
with a Neumann-type initial condition is when $0<s<1$, see Lemma \ref{lem:Besselivp}. When $s>1$, we prove
uniqueness for the second order initial value problem \eqref{eq:introextension} with $(2[s]+2)>2$ initial conditions \eqref{eq:bvpinyalls}, see 
Theorem \ref{thm:uniquenessgeneral}.

Observe that considering the initial value problems \eqref{eq:uniqueness s greater 2} and \eqref{eq:bvpinyalls}
instead of trying to impose conditions for $y$ near infinity in \eqref{eq:extension_problem_s_greater1}
and \eqref{eq:introextension} to prove well-posedness is enough for analytical, numerical and computational applications.
Indeed, the extension problem is most significant when $y\to 0$.

Our analysis is finally able to clarify the role of initial conditions in extension problems
for higher fractional power operators. In fact, extension problems of this kind have been considered in recent years.
For Hilbert spaces, the first result was proved by Roncal and Stinga in \cite{Roncal-Stinga}, where it was shown that $U[u](y)$
solves \eqref{eq:introextension} and satisfies \eqref{eq:introNeumanny}, for any $s>0$ noninteger.
For the fractional Laplacian in $\R^n$, Yang in \cite{Yang} applied the Fourier transform
to prove a characterization of $(-\Delta)^s$, $s>0$ noninteger, through a higher order extension equation
as in \eqref{eq:extension_problem_s_greater1} in adequate Sobolev spaces and satisfying
a number of initial conditions that are a mix between some of those in
\eqref{eq:uniqueness s greater 2} and some from \eqref{eq:bvpinyalls}.
Later on, Cora and Musina in \cite{Cora-Musina}
further expanded the results of Yang by using variational methods and the Poisson kernel from \cite{Caffarelli-Silvestre}.
In particular, \cite{Cora-Musina} shows that minimizers of the corresponding energy, which
imposes conditions on $U$ as $y\to\infty$, are unique. They also prove various properties of derivatives of $U$ at $y=0$.
However, it is not clear from their work which are the initial conditions that would suffice for uniqueness.

More recently, Musina and Nazarov proved in \cite{Musina-Nazarov} an extension problem characterization
of higher fractional powers $(-L)^s$ for nonnegative symmetric operators $-L$ on Hilbert spaces
with an extension equation like in \cite{Cora-Musina,Yang}. For this, they used spectral separation of
variables and Bessel functions, extending the methodology initially put forth in \cite{Stinga-Torrea} for $0<s<1$.
The results of our paper show that, under the adequate initial conditions that we establish,
the extension characterizations for Hilbert spaces of \cite{Roncal-Stinga}
and \cite{Musina-Nazarov} are equivalent. In turn, we provide new semigroup subordination formulas for the solution $U$
that are not present in \cite{Cora-Musina,Musina-Nazarov,Yang}, and generalize to Banach spaces
by completely different techniques.

Although fractional powers of linear operators on Banach spaces is a fairly classical topic in functional analysis
and operator theory \cite{Balakrishnan,Butzer-Berens-Book,Martinez,Martinez-book,Yosida}, for the past 15 to 20 years
the mathematics, physics, engineering, biology and computer science
communities have shown a still increasing interest in the theory and applications of 
nonlinear nonlocal problems involving fractional power operators. Higher fractional powers $s>1$
are central in many pure and applied problems. We only mention one application to fluid mechanics here.
Let $v=(v_1(x_1,x_2),v_2(x_1,x_2))$ solve
the $2/3$-fractional Stokes equation of anomalous turbulence
$(-\Delta)^{2/3}v+\nabla p=f$, with $\dive v=0$ in $\R^2$,
for a given vector field $f=(f_1,f_2)$ and a scalar function $p$, see \cite{Chen}.
Since $v$ is divergence free, one can introduce a stream function $u=u(x_1,x_2)$ set by $v_1=-\partial_{x_2}u$ and $v_2=\partial_{x_1}u$.
By letting $g:=\partial_{x_2}f_1-\partial_{x_1}f_2$
it easily follows that $u$ is a solution to the higher order fractional equation $(-\Delta)^{1+2/3}u=g$.
In addition, we especially point out that extension problems have been crucial
for the numerical analysis and computational implementation of fractional nonlocal equations. Indeed,
the seminal work \cite{Nochetto-et-al} generates finite element approximations by using the extension and formulas of \cite{Stinga-Torrea}.

Our results are general and can be applied, for instance, to higher fractional powers of second order elliptic operators, parabolic operators,
hyperbolic operators, Laplacians on manifolds and graphs, among many others. They will be useful for numerical implementation.
Furthermore, our theorems can be extended to generators $L$ of tempered $\alpha$-times integrated semigroups, for $\alpha\geq0$,
to fractional complex powers $s\in\C\setminus\N$ with positive real part $\Re(s)>0$, and the solution $U(y)$
can be analytically extended to a complex sector including the half line $(0,\infty)$.
These will appear elsewhere.
We also mention here that the condition $0\in\rho(L)$
is not restrictive for applications. For example, for the fractional Neumann
Laplacian $(-\Delta_N)^s$ in a bounded domain or the fractional Ornstein--Uhlenbeck operator $(-\Delta+2x\cdot\nabla)^s$
in the whole space $\R^n$, where the first eigenvalue is $0$,
one looks for solutions in quotient spaces over constants so that uniqueness holds.

The paper is organized as follows. Section \ref{sec:balakrishnan} contains preliminary facts about fractional powers of nonnegative closed operators
and of generators of uniformly bounded $C_0$-semigroups
on Banach spaces. In particular, Theorems \ref{lem:approx_identity} and \ref{lem:inverse} will be essential to prove our main results.
In Section \ref{sec:extension} we analyze the explicit semigroup subordination formula of the solution to the extension equation \eqref{eq:Usemigroup}.
Uniqueness is addressed in Section \ref{sec:uniqueness}.
In Section \ref{sec: extension s small} we present and prove
Theorem \ref{thm:extension-general} for the cases $0<s<1$ and $1<s<2$.
We do this for two reasons. First, they are the cases that are mostly used in applications
and for which we can say more about characterizing $(-L)^su$ in terms of limits
of incremental quotients of $U$, see \eqref{eq:neumanns} and \eqref{eq:1s2incremental}. The latter will be helpful
when performing finite difference approximations of $U$. Second, we believe that it will help the
reader in understanding the structure of the proof of the general Theorem \ref{thm:extension-general},
which is done in Section \ref{sec: extension s general}.

%%%%%%%%%%%%%%%%%%%%%%%%%%%%%%%%%%%%%%%%%%%%%%%%%%%%%%
\section{Fractional power operators in Banach spaces}\label{sec:balakrishnan}
%%%%%%%%%%%%%%%%%%%%%%%%%%%%%%%%%%%%%%%%%%%%%%%%%%%%%%

Throughout the paper, $X$ denotes a Banach space with norm $\|\cdot\|_X$, $I$ is the identity operator
and $A:D(A)\subset X\to X$ is a linear operator with domain $D(A)$ and range $R(A)$.
The resolvent set $\rho(A)$ of $A$ is the set of all $\lambda\in\C$ such that
$R(\lambda I - A)$ is dense in $X$ and $(\lambda I - A)^{-1}$ is a bounded operator on its domain $D((\lambda I-A)^{-1})=R(\lambda I -A)$. 
Hence, $(\lambda I -A)^{-1}$ extends as a bounded linear
operator on $X$. The spectrum of $A$ is $\sigma(A)=\C\setminus\rho(A)$. We say that $A$ is nonnegative if
$(-\infty,0)\subset\rho(A)$ and
$$M_A:=\displaystyle\sup_{\mu>0}\|\mu(\mu I +A)^{-1}\|<\infty.$$
In this case, we call $M_A$ the nonnegativity constant of $A$.

From now on, and for the rest of the paper, we assume that $A$ is a nonnegative operator.

%%%%%%%%%%%%%%%%%%%%%%%%%%%%%%%%%%%%%%%%%%%%%%%%%%%%%%
\subsection{Fractional powers of nonnegative operators}
%%%%%%%%%%%%%%%%%%%%%%%%%%%%%%%%%%%%%%%%%%%%%%%%%%%%%%

The construction of fractional powers of $A$ is classical, see \cite{Balakrishnan,Martinez,Martinez-book,Yosida}.

For $s>0$, consider first the Balakrishnan operators $J^s$ defined as follows. For $0<s<1$, $D(J^s)=D(A)$ and
$$J^s u = \frac{\sin(s\pi)}{\pi} \int^\infty_0 \mu^{s-1} (\mu I +A)^{-1} Au\, d\mu.$$
For $0<s<2$, $D(J^s) = D(A^2)$ and
$$J^s u = \frac{\sin(s\pi)}{\pi} \int^\infty_0 \mu^{s-1} \bigg[(\mu I +A)^{-1} - \frac{\mu}{1+\mu^2} \bigg]Au \, d\mu + \sin(s \pi/2)Au.$$
For $n<s<n+1$, $D(J^s)=D(A^{n+1})$ and $J^s u = J^{s-n}A^n u$.
Finally, for $n<s\leq n+1$, $D(J^s) = D(A^{n+2})$ and $J^s u = J^{s-n}A^n u$.

We next define the positive fractional powers of $A$.
If $A$ is bounded and $s>0$ then $A^s=J^s$, with domain $D(A^s)=D(J^s)=D(A)=X$.
If $A$ is unbounded and $0 \in \rho(A)$ then $A^s = [(A^{-1})^s]^{-1}$, with domain $D(A^s)=R((A^{-1})^s)$.
If $A$ is unbounded and $0 \in \sigma(A)$ then 
$$A^s u = \lim_{\varepsilon \to 0} (\varepsilon I+A)^s u.$$
The domain of $A^s$ in this last case is the collection of all $u$'s for which the limit exists.

If $A$ is bounded then $A^s$ is bounded. If 
$A$ is unbounded, $A^s$ is closed and $D(A^s)\subset\overline{D(A)}$.
Also, $A^1=A$.

If $0\in\rho(A)$ then $(A^{-1})^s$ is injective.
We can then consider negative fractional powers of $A$ as well.
Indeed, in this case, $A^{-s} = (A^{-1})^s$ with domain $D(A^{-s})=D((A^{-1})^s)=X$.
We have that $A^{-s}=(A^s)^{-1}$.

One of the main results we will need is the following consequence of Propositions 7.4.1 and 7.2.2 of \cite{Martinez-book},
in which it is important to note the presence of the hypothesis $u\in D(A^\alpha)$.

\begin{thm}\label{lem:approx_identity}
Let $A$ be a nonnegative operator with $0\in\rho(A)$. Then, for all $u\in D(A^\alpha)$ and $0\leq\beta\leq\alpha$,
$$ \lim_{\varepsilon\to0} (\varepsilon I+A)^{-\beta} A^\alpha u = A^{\alpha-\beta}u.$$ 
\end{thm}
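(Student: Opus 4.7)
The plan is to split the problem into a multiplicativity step and a strong convergence step, and then to establish the latter via the Balakrishnan integral representation. Since $A$ is nonnegative with $0\in\rho(A)$, each negative power $A^{-\gamma}$ ($\gamma\geq 0$) is a bounded operator on $X$, the domains are monotone $D(A^\alpha)\subset D(A^{\alpha-\beta})$ for $0\leq\beta\leq\alpha$, and the composition law yields
$$A^{-\beta}A^\alpha u=A^{\alpha-\beta}u\qquad(u\in D(A^\alpha)).$$
Setting $v:=A^\alpha u\in X$, the claim reduces to showing
$$\lim_{\vep\to 0^+}(\vep I+A)^{-\beta}v=A^{-\beta}v,$$
and I would actually aim at the stronger operator norm convergence $(\vep I+A)^{-\beta}\to A^{-\beta}$, which then holds for every $v\in X$ and in particular for $v=A^\alpha u$.

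For $\beta=0$ there is nothing to prove; for $\beta=n\in\N$, iterating the resolvent identity $A^{-1}-(\vep I+A)^{-1}=\vep(\vep I+A)^{-1}A^{-1}$ (and using $\sup_\vep\|(\vep I+A)^{-1}\|<\infty$, which the same identity provides for $\vep$ small) gives $\|(\vep I+A)^{-n}-A^{-n}\|=O(\vep)$. For $0<\beta<1$ I would invoke the Balakrishnan representation
$$B^{-\beta}=\frac{\sin(\pi\beta)}{\pi}\int_0^\infty\mu^{-\beta}(\mu I+B)^{-1}\,d\mu,$$
valid for any nonnegative $B$ with $0\in\rho(B)$, applied to both $B=A$ and $B=\vep I+A$. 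Subtracting the two representations and using the standard resolvent identity $(\mu I+A)^{-1}-((\mu+\vep)I+A)^{-1}=\vep(\mu I+A)^{-1}((\mu+\vep)I+A)^{-1}$ produces
$$A^{-\beta}-(\vep I+A)^{-\beta}=\frac{\vep\sin(\pi\beta)}{\pi}\int_0^\infty\mu^{-\beta}(\mu I+A)^{-1}((\mu+\vep)I+A)^{-1}\,d\mu.$$
The integrand is dominated uniformly in $\vep$ by $\mu^{-\beta}$ times an $L^1$ function of $\mu$: one uses the nonnegativity bound $\|(\mu I+A)^{-1}\|\leq M_A/\mu$ near $\mu=\infty$ and the boundedness of $(\mu I+A)^{-1}$ near $\mu=0$ (from $0\in\rho(A)$ and continuity of the resolvent there). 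The prefactor $\vep$ then delivers operator norm convergence to $0$. For general noninteger $\beta>0$ I would split $\beta=[\beta]+(\beta-[\beta])$, write $(\vep I+A)^{-\beta}=(\vep I+A)^{-[\beta]}(\vep I+A)^{-(\beta-[\beta])}$, and compose the integer and fractional cases.

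The main obstacle is the uniform-in-$\vep$ integrability of the Balakrishnan integrand near $\mu=0$, which is where the hypothesis $0\in\rho(A)$ becomes essential: it furnishes continuity of $\mu\mapsto(\mu I+A)^{-1}$ at $\mu=0$ and hence a uniform bound on $[0,1]$, so that $\mu^{-\beta}$ is integrable there precisely because $0<\beta<1$. The large-$\mu$ tail and the integer case are routine consequences of nonnegativity and the resolvent identity, and together with the composition law these are the substance of the cited Propositions 7.4.1 and 7.2.2 of \cite{Martinez-book}.
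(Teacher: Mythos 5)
Your proposal is correct, and it is worth noting that the paper does not actually prove this statement: it simply quotes it as a consequence of Propositions 7.4.1 and 7.2.2 of the Mart\'inez--Sanz book. Your argument is a legitimate self-contained substitute, and in fact proves more than is asserted, namely the operator-norm convergence $\|(\varepsilon I+A)^{-\beta}-A^{-\beta}\|=O(\varepsilon)$, which is where the hypothesis $0\in\rho(A)$ genuinely enters (without it one only gets strong convergence on suitable subspaces). The reduction correctly isolates the role of the hypothesis $u\in D(A^{\alpha})$: it is needed only so that $v=A^{\alpha}u$ exists and so that the limit can be identified as $A^{\alpha-\beta}u$ via the composition law, not for the convergence itself. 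All the estimates check out: the resolvent identity gives the integer case and the uniform bound $\sup_{0<\varepsilon\leq\varepsilon_0}\|(\varepsilon I+A)^{-1}\|<\infty$; the subtracted Balakrishnan integrals are dominated by $C\mu^{-\beta}$ on $(0,1]$ (continuity of the resolvent on the compact set $\{-\lambda:0\leq\lambda\leq 2\}\subset\rho(A)$) and by $M_A^{2}\mu^{-\beta-2}$ on $[1,\infty)$, so the prefactor $\varepsilon$ delivers the norm estimate. The one place where you are terse is the composition law $A^{-\beta}A^{\alpha}u=A^{\alpha-\beta}u$ together with the domain monotonicity $D(A^{\alpha})\subset D(A^{\alpha-\beta})$; this is itself the content of the cited Proposition 7.2.2 and is not completely automatic, but under $0\in\rho(A)$ it does reduce to the elementary semigroup property $A^{-\gamma}A^{-\delta}=A^{-(\gamma+\delta)}$ of the bounded negative powers (write $u=A^{-\alpha}w$ and factor), so citing it as standard is acceptable. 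In short: where the paper outsources the result to the literature, your route proves it directly and yields a quantitative strengthening.
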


%%%%%%%%%%%%%%%%%%%%%%%%%%%%%%%%%%%%%%%%%%%%%%%%%%%%%%
\subsection{Fractional powers of generators}
%%%%%%%%%%%%%%%%%%%%%%%%%%%%%%%%%%%%%%%%%%%%%%%%%%%%%%

Here we collect classical facts about semigroups of linear operators
and their infinitesimal generators, see \cite{Butzer-Berens-Book,Pazy,Yosida}.

A family $\{S_t\}_{t \geq 0}$ of bounded linear operators on $X$ is a semigroup if 
$S_0 = I$ and $S_{t_1}\circ S_{t_2} = S_{t_1+t_2}$ for every $t_1,t_2\geq0$.
If, in addition, $S_tu \to u$ in $X$ as $t\to 0$ for all $u \in X$, then we say that
$\{S_t\}_{t \geq 0}$ is a $C_0$-semigroup.
If $\{S_t\}_{t\geq0}$ is a $C_0$-semigroup then there exist constants $\omega\geq0$ and $M\geq1$
such that the operator norm of $S_t$ satisfies the estimate $\|S_t\|\leq Me^{\omega t}$,
for all $t\geq0$. If $\omega=0$, that is,
$\|S_tu\|_X\leq M\|u\|_X$ for all $t\geq0$ and $u\in X$, then the $C_0$-semigroup is said to be uniformly bounded.

From now on, and for the rest of the paper, we will only consider uniformly bounded $C_0$-semigroups.

The infinitesimal generator of $\{S_t\}_{t \geq 0}$ is the linear operator $L$ defined as
$$Lu=\lim_{t\to0}\frac{S_tu-u}{t}$$
with domain $D(L)=\{u\in X:Lu~\hbox{exists}\}\subset X$. In this case,
we write
$$S_t\equiv e^{tL}.$$
If $u\in D(L)$ then the $X$-valued function $v=e^{tL}u$ is differentiable
for $t\geq0$ and satisfies the equation
$\partial_tv = Lv$ for $t>0$, with $v=u$ at $t=0$.

The infinitesimal generator $L$ is a closed operator with domain $D(L)$ dense in $X$.
Conversely, a linear operator $(L,D(L))$ on $X$ is said to generate a semigroup if there is 
a semigroup for which $L$ is its infinitesimal generator.
It follows form the Hille--Yosida and the Lumer--Phillips theorems that
a linear operator $(L,D(L))$ on $X$ is the infinitesimal generator of a $C_0$-semigroup
$\{e^{tL}\}_{t\geq0}$ satisfying $\|e^{tL}\|\leq M$, for some $M\geq1$ and all $t\geq0$, if and only if
$L$ is closed, $D(L)$ is dense in $X$, $(-\infty,0)\subset\rho(-L)$ and
$$\sup_{\mu>0}\|\mu^n(\mu I-L)^{-n}\|\leq M\qquad\hbox{for}~n\geq1.$$

If $L$ is the infinitesimal generator of a uniformly bounded $C_0$-semigroup $\{e^{tL}\}_{t\geq0}$ then
the last statement implies that $A=-L$ is a nonnegative operator.
Thus, the fractional powers $A^s=(-L)^s$ can be defined for any $s>0$ as in the previous subsection.
We will need the following result, proved in \cite[Lemma~6.1.5]{Martinez-book}
when $0<\alpha<1$. The generalization to $\alpha>1$ is obtained by induction using Balakrishnan's operators.

\begin{thm}\label{lem:inverse}
Let $L$ be the infinitesimal generator of a uniformly bounded $C_0$-semigroup $\{e^{tL}\}_{t\geq0}$ on $X$. Then,
for any $\varepsilon>0$ and $\alpha>0$,
$$(\varepsilon I-L)^{-\alpha}u = \frac{1}{\Gamma(\alpha)} \int^\infty_0 e^{-\varepsilon t} e^{tL}u\, \frac{dt}{t^{1-\alpha}}$$
where $\Gamma$ denotes the Gamma function. If $0\in\rho(L)$ then this formula is also valid for $\varepsilon=0$.
\end{thm}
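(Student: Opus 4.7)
The plan is to build up the identity in stages: first $\alpha=1$ via the Laplace-transform formula for the resolvent, then $0<\alpha<1$ via Balakrishnan's representation of negative fractional powers combined with a Fubini argument, then general $\alpha>0$ by induction using the semigroup property of negative powers, and finally the $\varepsilon=0$ case by a limiting argument that exploits the hypothesis $0\in\rho(L)$.

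For $\alpha=1$ and $\varepsilon>0$, uniform boundedness of $\{e^{tL}\}$ makes $R_\varepsilon u:=\int_0^\infty e^{-\varepsilon t}e^{tL}u\,dt$ an absolutely convergent Bochner integral in $X$, and standard manipulations with the semigroup law show $R_\varepsilon=(\varepsilon I-L)^{-1}$. For $0<\alpha<1$ and $\varepsilon>0$, I would apply Balakrishnan's formula for the nonnegative operator $A:=\varepsilon I-L$,
$$A^{-\alpha}u=\frac{\sin(\alpha\pi)}{\pi}\int_0^\infty\mu^{-\alpha}(\mu I+A)^{-1}u\,d\mu,$$
substitute the resolvent identity with $\mu+\varepsilon$ in place of $\varepsilon$, interchange integration via Fubini (justified since $\int_0^\infty\mu^{-\alpha}(\mu+\varepsilon)^{-1}\,d\mu<\infty$), evaluate the inner integral using $\int_0^\infty\mu^{-\alpha}e^{-\mu t}\,d\mu=\Gamma(1-\alpha)t^{\alpha-1}$, and apply Euler's reflection $\Gamma(\alpha)\Gamma(1-\alpha)=\pi/\sin(\alpha\pi)$ to arrive at the target formula.

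For $\alpha>1$ and $\varepsilon>0$, I would argue by induction. Writing $\alpha=n+\beta$ with $n\in\mathbb{N}$ and $0<\beta\leq 1$, and denoting by $B_\alpha^\varepsilon u$ the target right-hand side, a direct Fubini computation together with $\int_0^r(r-s)^{\beta-1}\,ds=r^\beta/\beta$ yields $R_\varepsilon B_\beta^\varepsilon=B_{\beta+1}^\varepsilon$; iterating gives $R_\varepsilon^n B_\beta^\varepsilon=B_{n+\beta}^\varepsilon$. Combined with the semigroup property $A^{-\alpha}=A^{-n}A^{-\beta}=R_\varepsilon^n A^{-\beta}$ (valid since all involved negative fractional powers are bounded operators with full domain) and $A^{-\beta}=B_\beta^\varepsilon$ from the preceding step, this identifies $A^{-\alpha}$ with $B_\alpha^\varepsilon$.

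For $\varepsilon=0$ under the extra hypothesis $0\in\rho(L)$, I would pass to the limit $\varepsilon\to 0^+$ in the identity already proven. On the left side, Theorem \ref{lem:approx_identity} applied to $v:=(-L)^{-\alpha}u\in D((-L)^\alpha)$ with $\beta=\alpha$ gives $(\varepsilon I-L)^{-\alpha}u\to(-L)^{-\alpha}u$. On the right side, dominated convergence handles $\int_0^1$, whereas for the tail $\int_1^\infty$ I would use boundedness of $(-L)^{-N}$ (available since $0\in\rho(L)$) to write $u=(-L)^Nv$ with $v=(-L)^{-N}u\in D(L^N)$ and $N>\alpha$, then integrate by parts $N$ times to transfer derivatives onto the weight $t^{\alpha-1}e^{-\varepsilon t}$; the resulting integrals acquire decay $t^{\alpha-N-1}$ that is absolutely integrable on $[1,\infty)$ uniformly in $\varepsilon\geq 0$, and all boundary terms at infinity vanish once $k\geq\alpha$. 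The main obstacle is precisely this last step: under only uniform boundedness of the semigroup, the integrand $t^{\alpha-1}\|e^{tL}u\|_X$ need not be integrable at infinity, so one cannot directly apply dominated convergence to the $\varepsilon>0$ identity; the integration-by-parts device, made possible by boundedness of $(-L)^{-N}$, is the essential tool that bridges the $\varepsilon>0$ and $\varepsilon=0$ formulas.
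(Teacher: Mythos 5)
For $\varepsilon>0$ your argument is correct and is essentially the paper's own route: the paper merely cites \cite[Lemma~6.1.5]{Martinez-book} for $0<\alpha<1$ and remarks that $\alpha>1$ follows by induction, which is exactly what you carry out explicitly (Laplace transform of the resolvent for $\alpha=1$; Balakrishnan's integral plus Fubini and the reflection formula for $0<\alpha<1$; the convolution identity $R_\varepsilon B^\varepsilon_\beta=B^\varepsilon_{\beta+1}$ together with additivity of the negative powers for general $\alpha$). The one ingredient you import without comment is that the representation $\frac{\sin(\alpha\pi)}{\pi}\int_0^\infty\mu^{-\alpha}(\mu I+A)^{-1}u\,d\mu$ coincides with the paper's definition $A^{-\alpha}=(A^{-1})^{\alpha}$ for the nonnegative operator $A=\varepsilon I-L$ with $0\in\rho(A)$; this is standard but deserves a citation, since the paper defines negative powers through $(A^{-1})^{\alpha}$.

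The genuine issue is the $\varepsilon=0$ step. What your limiting argument proves is that $\lim_{\varepsilon\to0^+}\frac{1}{\Gamma(\alpha)}\int_0^\infty e^{-\varepsilon t}t^{\alpha-1}e^{tL}u\,dt=(-L)^{-\alpha}u$ (the left-hand limit via Theorem \ref{lem:approx_identity} is fine), but the statement asserts equality with the integral at $\varepsilon=0$, and nothing in your sketch shows that this integral exists: under only uniform boundedness and $0\in\rho(L)$ it is never absolutely convergent in general, so it must be read as an improper integral. If you try to undo your integration by parts at $\varepsilon=0$, the boundary terms at infinity are of the form $t^{\alpha-1-k}e^{tL}L^{N-1-k}v$ for $0\le k\le N-1$, and only those with $k>\alpha-1$ vanish; your phrase ``all boundary terms at infinity vanish once $k\ge\alpha$'' silently discards the terms with $k<\alpha-1$, which are present as soon as $\alpha>1$ and do not vanish. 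Indeed, for $\alpha\ge1$ the improper integral can genuinely diverge: take $L$ to be multiplication by $i\xi$ on $L^2(1,\infty)$, which generates a unitary (hence uniformly bounded) group with $0\in\rho(L)$, and for which $\int_0^R t^{\alpha-1}e^{tL}u\,dt$ has no limit as $R\to\infty$ when $\alpha\ge1$. So for $\alpha\ge1$ the $\varepsilon=0$ identity can only hold in the Abel-regularized sense that you actually establish (which is, incidentally, the only way the paper ever uses it), whereas for $0<\alpha<1$ your device with $N=1$ does close the argument, because then every boundary term at infinity carries a negative power of $t$ and the improper integral converges, and Abel regularization is consistent with improper convergence. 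To make the proof match the statement you must either prove convergence of the $\varepsilon=0$ integral (possible only for $0<\alpha<1$) or state explicitly that for $\alpha\ge1$ the right-hand side is interpreted as the limit of the $\varepsilon>0$ expressions.
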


%%%%%%%%%%%%%%%%%%%%%%%%%%%%%%%%%%%%%%%%%%%%%%%%%%%%%%
\section{Basic properties of the subordination formula solution}\label{sec:extension}
%%%%%%%%%%%%%%%%%%%%%%%%%%%%%%%%%%%%%%%%%%%%%%%%%%%%%%

In this section, $L$ is the infinitesimal generator of a uniformly bounded
$C_0$-semigroup $\{e^{tL}\}_{t\geq0}$ such that $\|e^{tL}u\|_X\leq M\|u\|_X$, for all $u\in X$
and $t\geq0$, for some $M\geq1$.

\begin{lem}\label{lem:extension_equation}
Fix any $s>0$ noninteger. Given any $u\in X$, define $U(y)=U[u](y)$, $y>0$, as in \eqref{eq:Usemigroup}.
The following properties hold.
\begin{enumerate}[$(1)$]
\item The integral defining $U(y)$ is absolutely convergent in the sense of Bochner and
$$\|U(y)\|_X\leq M\|u\|_X\qquad\hbox{for all}~y>0.$$
\item $U\in C([0,\infty);X)$ and $\lim_{y\to0}U(y)=u$ in $X$.
\item For each $y>0$ and $k\geq0$, $U(y)\in D(L^k)$ and
$$L^kU(y)=\frac{(-1)^k}{4^s\Gamma(s)}\int_0^\infty\big(\tfrac{1-2s}{y}\partial_y+\partial_{yy}\big)^k\big(y^{2s}e^{-y^2/(4t)}\big)
e^{tL}u\,\frac{dt}{t^{1+s}}.$$
\item For any $k\geq0$, $U\in C^\infty((0,\infty);D(L^k))$ and, for any $m\geq1$,
$$\frac{d^m}{dy^m}U(y)=\frac{1}{4^s\Gamma(s)}\int_0^\infty\frac{\partial^m}{\partial y^m}\big(y^{2s}e^{-y^2/(4t)}\big)e^{tL}u\,\frac{dt}{t^{1+s}}.$$
In particular,
$$\big(\tfrac{1-2s}{y}\partial_y+\partial_{yy}\big)^kU=(-L)^kU(y)\qquad\hbox{for all}~y>0.$$
\end{enumerate}
\end{lem}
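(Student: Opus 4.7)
My strategy is to dispose first of the elementary estimates and continuity in parts (1) and (2), then to set up the integration-by-parts machinery for (3), and finally to deduce (4) by differentiating under the integral. The real difficulty is concentrated in (3), since $u$ is not assumed to lie in $D(L)$.

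For parts (1) and (2), I would apply the substitution $r = y^2/(4t)$ to rewrite
\[
U(y) = \frac{1}{\Gamma(s)} \int_0^\infty r^{s-1} e^{-r}\, e^{(y^2/(4r))L}u\, dr.
\]
Combined with $\|e^{tL}\| \le M$ and $\int_0^\infty r^{s-1}e^{-r}\,dr = \Gamma(s)$, this simultaneously delivers Bochner absolute convergence and the bound $\|U(y)\|_X \le M\|u\|_X$. Strong continuity of the semigroup at $t=0$ and dominated convergence then give $U(y) \to u$ in $X$ as $y\to 0$, while joint continuity on $(0,\infty)$ follows from the same representation and continuity of $t\mapsto e^{tL}u$.

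For part (3), the obstruction is that without $u \in D(L)$ we cannot differentiate $e^{tL}u$ in $t$ or apply $L$ pointwise under the integral. The unlocking device is twofold. First, a direct computation verifies that the kernel $\Phi(y,t) := y^{2s} e^{-y^2/(4t)}/t^{1+s}$ satisfies
\[
\big(\tfrac{1-2s}{y}\partial_y + \partial_{yy}\big)\Phi(y,t) = \partial_t \Phi(y,t), \qquad y,t > 0,
\]
with $\Phi(y,0^+) = \Phi(y,\infty) = 0$. Second, the classical $C_0$-semigroup identity says that for every $v \in X$ one has $\int_0^\tau e^{tL} v\,dt \in D(L)$ with $L\int_0^\tau e^{tL}v\,dt = e^{\tau L}v - v$. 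Writing $\Phi(y,t) = -\int_t^\infty \partial_\tau \Phi(y,\tau)\,d\tau$ and invoking Fubini (justified by absolute convergence) recasts the defining integral as
\[
U(y) = -\frac{1}{4^s\Gamma(s)}\int_0^\infty \partial_\tau \Phi(y,\tau)\,\Big[\int_0^\tau e^{tL}u\,dt\Big]\,d\tau.
\]
Since $L$ is closed, this places $U(y)$ in $D(L)$, and, using $\int_0^\infty \partial_\tau \Phi(y,\tau)\,d\tau = 0$ to discard the constant $u$-term, produces
\[
LU(y) = -\frac{1}{4^s\Gamma(s)} \int_0^\infty \big(\tfrac{1-2s}{y}\partial_y + \partial_{yy}\big)[y^{2s} e^{-y^2/(4\tau)}]\,e^{\tau L}u\,\frac{d\tau}{\tau^{1+s}},
\]
which is the $k=1$ case. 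The general $k$ follows by induction: each application of $\big(\tfrac{1-2s}{y}\partial_y + \partial_{yy}\big)$ yields a kernel of the same polynomial-times-Gaussian form, still vanishing at $t=0,\infty$ with integrable $\partial_\tau$, so the identical integration-by-parts argument applies to the representation obtained at the previous step.

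Finally, for (4), I would differentiate under the integral sign in $y$: every $\partial_y^m$ of $y^{2s}e^{-y^2/(4t)}/t^{1+s}$ is of the form (polynomial in $y$ and $1/t$) $\times\, e^{-y^2/(4t)}/t^{1+s}$, which is dominated in $t$ by a fixed $L^1(0,\infty)$ function on any compact neighborhood of a given $y_0 > 0$, so the uniform bound $\|e^{tL}u\|_X \le M\|u\|_X$ and dominated convergence justify the differentiation and yield the asserted formula for $d^m U/dy^m$. Since the operator $\big(\tfrac{1-2s}{y}\partial_y + \partial_{yy}\big)^k$ produces the same kernel against $e^{tL}u$ as $(-L)^k$ does via the $L^k$-formula of (3), these two representations coincide, giving the intertwining $\big(\tfrac{1-2s}{y}\partial_y + \partial_{yy}\big)^k U = (-L)^k U(y)$. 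The main obstacle of the whole lemma is thus concentrated in (3), and it is resolved precisely by the $\Phi$-identity combined with $L\int_0^\tau e^{tL}u\,dt = e^{\tau L}u - u$.
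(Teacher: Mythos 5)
Your proposal is correct and follows essentially the same route as the paper: parts (1), (2) and (4) by the change of variables $r=y^2/(4t)$ and domination, and part (3) by Miana's integration-by-parts device, i.e.\ testing $\partial_\tau\Phi$ against $v(\tau)=\int_0^\tau e^{tL}u\,dt\in D(L)$ with $Lv(\tau)=e^{\tau L}u-u$ and the kernel identity $\big(\tfrac{1-2s}{y}\partial_y+\partial_{yy}\big)\Phi=\partial_t\Phi$. Your Fubini formulation and your use of $\int_0^\infty\partial_\tau\Phi\,d\tau=0$ to kill the $u$-term are just cosmetic variants of the paper's integration by parts and its normalization $\frac{1}{4^s\Gamma(s)}\int_0^\infty y^{2s}e^{-y^2/(4t)}\,\frac{dt}{t^{1+s}}=1$.
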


\begin{proof}
The result was proved for $0<s<1$ in \cite{Gale-Miana-Stinga}, see also \cite{Stinga-Torrea} for the case of
Hilbert spaces.
A careful inspection shows that the proofs in \cite{Gale-Miana-Stinga} extend to any noninteger $s>0$
without major modifications. We only sketch the argument for $(3)$, which uses an integration by parts
idea of Miana \cite{Miana}. Let us define
$$v(t)=\int_0^te^{rL}u\,dr\qquad t>0.$$
Then $v(t)\in D(L)$ and $\partial_tv(t) = e^{t L} u$. Integration by parts gives,
\begin{align*}
    U(y) &= \frac{y^{2s}}{4^s\Gamma(s)}\int_0^\infty e^{-y^2/(4t)}\partial_tv(t)\,\frac{dt}{t^{1+s}} \\
    &=-\frac{y^{2s}}{4^s\Gamma(s)}\int_0^\infty \partial_t \bigg( \frac{e^{-y^2/(4t)}}{t^{1+s}} \bigg)v(t)\,dt \\
    &= -\frac{1}{4^s\Gamma(s)}\int_0^\infty\big(\tfrac{1-2s}{y}\partial_y+\partial_{yy}\big)\big(y^{2s}e^{-y^2/(4t)}\big)
v(t)\,\frac{dt}{t^{1+s}}.
\end{align*}
Using that $Lv(t)=e^{tL}u-u$ and the dominated convergence theorem, it can be verified that $U(y)\in D(L)$ and
\begin{align*}
    LU(y) &= -\frac{1}{4^s\Gamma(s)}\int_0^\infty \big(\tfrac{1-2s}{y}\partial_y+\partial_{yy}\big)\big(y^{2s}e^{-y^2/(4t)}\big)\big(e^{tL}u-u\big)\,\frac{dt}{t^{1+s}}\\
&= -\frac{1}{4^s\Gamma(s)}\int_0^\infty \big(\tfrac{1-2s}{y}\partial_y+\partial_{yy}\big)\big(y^{2s}e^{-y^2/(4t)}\big) e^{tL}u\,\frac{dt}{t^{1+s}}.
\end{align*}
In the last line we used that
$$\frac{1}{4^s\Gamma(s)}\int_0^\infty y^{2s}e^{-y^2/(4t)}\,\frac{dt}{t^{1+s}} =1\qquad y>0$$
so its derivative with respect to $y$ vanishes. This shows $(3)$ for the case $k=1$. The general case $k\geq2$ is proved by induction using
the same integration by parts strategy.
\end{proof}

The next formula for $U$ in terms of $(-L)^su$ was derived for $0<s<1$ and $u\in D(L)$
in \cite{Gale-Miana-Stinga}. Inspecting the proof in \cite{Gale-Miana-Stinga}, we see that,
because Theorem \ref{lem:approx_identity} holds for $\alpha=s$ and $u\in D((-L)^s)$, we can extend it to any $s>0$ noninteger.
Thus, the proof is omitted.

\begin{thm}\label{lem: extension_epsilon_presentation}
Suppose that $0 \in \rho(L)$. Let $s>0$ be noninteger, $f\in X$ and assume that $u\in D((-L)^s)$ is the
unique solution to $(-L)^su=f$. Then $U(y)=U[u](y)$ given in \eqref{eq:Usemigroup} can also be written as
$$U(y)=\lim_{\varepsilon\to0}\frac{1}{\Gamma(s)}\int_0^\infty e^{-\varepsilon t}e^{-y^2/(4t)}e^{tL}f\,\frac{dt}{t^{1-s}}
\qquad\hbox{in}~X.$$
\end{thm}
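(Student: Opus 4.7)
My strategy is to adapt the proof of the analogous statement for $0 < s < 1$ and $u \in D(L)$ in Gal\'e--Miana--Stinga \cite{Gale-Miana-Stinga}, lifting the regularity assumption on $u$ to the sharp condition $u \in D((-L)^s)$ by invoking Theorem~\ref{lem:approx_identity} at the exponent $\alpha = \beta = s$ in place of the integer-exponent version used there.

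The plan has three steps. First, I would regularize $u$ by setting $u_\varepsilon := (\varepsilon I - L)^{-s} f$ for $\varepsilon > 0$. Since $u \in D((-L)^s)$ with $(-L)^s u = f$, Theorem~\ref{lem:approx_identity} (applied with $A = -L$ and $\alpha = \beta = s$) gives $u_\varepsilon \to u$ in $X$ as $\varepsilon \to 0$, and then the uniform bound of Lemma~\ref{lem:extension_equation}(1) yields $U[u_\varepsilon](y) \to U[u](y) = U(y)$ in $X$ for each fixed $y > 0$. Second, I would use the absolutely Bochner-convergent integral representation
$$u_\varepsilon = \frac{1}{\Gamma(s)} \int_0^\infty e^{-\varepsilon r} e^{rL} f \,\frac{dr}{r^{1-s}}$$
from Theorem~\ref{lem:inverse} and substitute it into the subordination formula \eqref{eq:Usemigroup} for $U[u_\varepsilon]$. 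Fubini (valid thanks to the $e^{-\varepsilon r}$ factor) together with the change of variables $\sigma = t + r$ reduces the double integral to a single integral over $\sigma$ against $e^{\sigma L} f$, with an explicit scalar kernel that can be evaluated by means of the Beta-type identity
$$\int_0^\sigma \frac{e^{-y^2/(4\tau)}}{\tau^{1+s}(\sigma-\tau)^{1-s}}\,d\tau = \frac{4^s\Gamma(s)\,\sigma^{s-1}e^{-y^2/(4\sigma)}}{y^{2s}},$$
which I would verify by the successive substitutions $\eta = \tau/\sigma$ and $\zeta = y^2/(4\eta\sigma)$ followed by a standard Gamma integral.

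Third, I would pass to the limit $\varepsilon \to 0$ in both $U[u_\varepsilon](y)$ and in the candidate expression
$$V_\varepsilon(y):=\frac{1}{\Gamma(s)}\int_0^\infty e^{-\varepsilon t - y^2/(4t)}e^{tL}f\,\frac{dt}{t^{1-s}},$$
which, via the same Beta identity, can be recast in the same $\sigma$-integral form. The difference $U[u_\varepsilon](y) - V_\varepsilon(y)$ is then controlled by the inner perturbation $e^{\varepsilon\tau} - 1$, pointwise $O(\varepsilon)$ in $\sigma$; the external factor $e^{-\varepsilon\sigma}$ absorbs the growth $e^{\varepsilon\tau}$ (since $\tau \leq \sigma$), and one concludes $V_\varepsilon(y) \to U(y)$. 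The main technical obstacle is precisely this final passage to the limit: the limiting kernel $\sigma^{s-1}e^{-y^2/(4\sigma)}$ is not integrable at infinity for $s>0$, so one cannot apply dominated convergence with an $\varepsilon$-independent dominating function, and instead must track the cancellation between $e^{-\varepsilon\sigma}$ outside and $e^{\varepsilon\tau}$ inside, splitting the $\sigma$-integration into a bounded region (where uniform convergence applies) and a tail (where the boundedness of $U[u_\varepsilon](y)$ from step one provides the necessary equicontinuity).
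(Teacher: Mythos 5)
Your overall route is the one the paper intends (the paper omits the proof, pointing to \cite{Gale-Miana-Stinga} upgraded by Theorem \ref{lem:approx_identity} with $\alpha=\beta=s$): Steps 1 and 2 are correct, and your Beta-type identity checks out. The gap is in Step 3. Writing the difference in the $\sigma$-variable,
$$U[u_\varepsilon](y)-V_\varepsilon(y)=\frac{y^{2s}}{4^s\Gamma(s)^2}\int_0^\infty e^{-\varepsilon\sigma}e^{\sigma L}f\bigg[\int_0^\sigma\frac{e^{-y^2/(4\tau)}\big(e^{\varepsilon\tau}-1\big)}{\tau^{1+s}(\sigma-\tau)^{1-s}}\,d\tau\bigg]d\sigma,$$
the estimate you describe (``$e^{-\varepsilon\sigma}$ absorbs $e^{\varepsilon\tau}$'') yields, after taking norms and reverting to the $(t,r)$ variables, the product $\Gamma(s)\varepsilon^{-s}\cdot\int_0^\infty(1-e^{-\varepsilon t})e^{-y^2/(4t)}t^{-1-s}\,dt$; the scalar integral is exactly of order $\varepsilon^{s}$ (substitute $t=v/\varepsilon$), so this bound is $O(1)$ for $0<s<1$ and even diverges for $s\geq1$ --- it never shows the difference vanishes. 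The proposed repair also fails: for fixed $R$, the tail $\int_R^\infty$ of either $U[u_\varepsilon](y)$ or $V_\varepsilon(y)$ in the $\sigma$-form blows up in absolute value as $\varepsilon\to0$ (by Fatou against the non-integrable limit kernel $\sigma^{s-1}e^{-y^2/(4\sigma)}$), and the uniform bound $\|U[u_\varepsilon](y)\|_X\leq M\|u_\varepsilon\|_X$ controls the whole integral, not its tail piece, so it provides no equicontinuity there.

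The fix is short and uses only what you already have: exploit the cancellation at the level of the exact formulas rather than of absolute values. Running your Fubini/Beta computation on $V_\varepsilon$ itself (equivalently, integrating out the $r$-variable first, which reconstitutes $u_\varepsilon=(\varepsilon I-L)^{-s}f$ via Theorem \ref{lem:inverse}) gives the identity
$$V_\varepsilon(y)=\frac{y^{2s}}{4^s\Gamma(s)}\int_0^\infty e^{-\varepsilon t}e^{-y^2/(4t)}e^{tL}u_\varepsilon\,\frac{dt}{t^{1+s}},$$
whence
$$U[u_\varepsilon](y)-V_\varepsilon(y)=\frac{y^{2s}}{4^s\Gamma(s)}\int_0^\infty\big(1-e^{-\varepsilon t}\big)e^{-y^2/(4t)}e^{tL}u_\varepsilon\,\frac{dt}{t^{1+s}}.$$
The norm of the right-hand side is at most $M\sup_{\varepsilon>0}\|u_\varepsilon\|_X$ times $\frac{y^{2s}}{4^s\Gamma(s)}\int_0^\infty(1-e^{-\varepsilon t})e^{-y^2/(4t)}t^{-1-s}\,dt$, which tends to $0$ by dominated convergence because $e^{-y^2/(4t)}t^{-1-s}\in L^1(0,\infty)$ for each fixed $y>0$. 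Combined with your Step 1, this gives $V_\varepsilon(y)\to U(y)$ and closes the argument.
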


%%%%%%%%%%%%%%%%%%%%%%%%%%%%%%%%%%%%%%%%%%%%%%%%%%%%%%
\section{Uniqueness}\label{sec:uniqueness}
%%%%%%%%%%%%%%%%%%%%%%%%%%%%%%%%%%%%%%%%%%%%%%%%%%%%%%

The main result of this section is the following.

\begin{thm}[Uniqueness for any $s>0$ noninteger]\label{thm:uniquenessgeneral}
Let $L$ be the infinitesimal generator of a uniformly bounded $C_0$-semigroup $\{e^{tL}\}_{t\geq0}$ on $X$.
Assume that $0\in\rho(L)$. Fix $s>0$ noninteger. Then the $(2[s]+2)$-order initial value extension problem
\begin{equation}\label{eq:problem1general}
\begin{cases}
\big(L+\frac{1-2(s-[s])}{y}\partial_y+\partial_{yy}\big)^{[s]+1}U=0&\hbox{for}~y>0\\
\lim_{y\to0}\big(L+\frac{1-2(s-[s])}{y}\partial_y+\partial_{yy}\big)^m U =u_m&\hbox{in}~X \\
\lim_{y\to0}y^{1-2(s-[s])}\partial_y\big(L+\frac{1-2(s-[s])}{y}\partial_y+\partial_{yy}\big)^mU=v_m&\hbox{in}~X
\end{cases}
\end{equation}
and the second order extension problem with $(2[s]+2)$ initial conditions
\begin{equation}\label{eq:problem2general}
\begin{cases}
LU+\frac{1-2s}{y}\partial_yU+\partial_{yy}U=0&\hbox{for}~y>0\\
\lim_{y\to0}\big(\frac{2}{y}\partial_y\big)^m U =u_m&\hbox{in}~X \\
\lim_{y\to0}y^{1-2(s-[s])}\partial_y\big(\frac{2}{y}\partial_y\big)^mU=v_m&\hbox{in}~X
\end{cases}
\end{equation}
where $u_m,v_m\in X$, $0\leq m\leq [s]$, have at most one classical solution.
\end{thm}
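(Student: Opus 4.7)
The plan is to reduce both uniqueness statements, by linearity, to the case of zero initial data ($u_m=v_m=0$ for all $0\le m\le[s]$), and then to iteratively peel off fractional "layers" until the problem collapses to the base case of second-order uniqueness for the exponent $\sigma:=s-[s]\in(0,1)$. That base case — uniqueness for $\mathcal{L}_\sigma V=0$ with vanishing Dirichlet and weighted-Neumann data at $y=0$ — is the content of Lemma~\ref{lem:Besselivp} (revisited in Theorem~\ref{lem:unique_A_unbounded}), which I will treat as given here.

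For the higher-order problem \eqref{eq:problem1general}, write $\mathcal{L}_\sigma := L+\tfrac{1-2\sigma}{y}\partial_y+\partial_{yy}$ and consider the telescope $W_m := \mathcal{L}_\sigma^m U$ for $0\le m\le[s]+1$. Under the zero-data assumption, $W_{[s]+1}\equiv 0$, and each $W_m$ satisfies $\mathcal{L}_\sigma W_m = W_{m+1}$ on $(0,\infty)$ together with $\lim_{y\to 0} W_m = 0$ and $\lim_{y\to 0} y^{1-2\sigma}\partial_y W_m = 0$. Starting from $m=[s]$, the equation for $W_{[s]}$ is homogeneous and the base case gives $W_{[s]}\equiv 0$. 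Then $W_{[s]-1}$ also solves a homogeneous problem with zero data, so the base case forces $W_{[s]-1}\equiv 0$. Descending yields $U = W_0\equiv 0$.

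For the second-order problem \eqref{eq:problem2general}, I would exploit the intertwining identity
\begin{equation*}
\Bigl(\partial_{yy}+\tfrac{1-2\alpha}{y}\partial_y\Bigr)\circ\tfrac{2}{y}\partial_y \;=\; \tfrac{2}{y}\partial_y\circ\Bigl(\partial_{yy}+\tfrac{1-2(\alpha+1)}{y}\partial_y\Bigr),
\end{equation*}
which is a direct computation. Writing $T:=\tfrac{2}{y}\partial_y$ and $B_\alpha:=\partial_{yy}+\tfrac{1-2\alpha}{y}\partial_y$, iteration gives $B_{s-m}\,T^m = T^m\,B_s$ for every integer $m\ge 0$. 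Since $L$ commutes with $T$ and $B_s U = -LU$, the functions $V_m := T^m U$ satisfy $\mathcal{L}_{s-m} V_m = L V_m + T^m B_s U = L V_m - L V_m = 0$. In particular, $V_{[s]}$ solves a second-order homogeneous problem with exponent $\sigma$ and vanishing Dirichlet and weighted-Neumann data, so the base case yields $V_{[s]}\equiv 0$. Once $V_{m+1} = T V_m \equiv 0$ for some $m<[s]$, we conclude $\partial_y V_m\equiv 0$, so $V_m$ is constant in $y$; continuity at $y=0$ together with $u_m=0$ then forces $V_m\equiv 0$. Iterating down to $m=0$ gives $U\equiv 0$.

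The main obstacle is clearly the base case: the second-order Bessel-type uniqueness for $0<\sigma<1$. Beyond that, the arguments are essentially algebraic — the telescoping of $\mathcal{L}_\sigma^{[s]+1}$ and the commutation $B_{s-m}T^m = T^m B_s$ — with the only nontrivial bookkeeping being to verify that the derived quantities $W_m$ and $V_m$ retain enough classical regularity in $(0,\infty)$ and continuity up to $y=0$ to legitimately apply the base case. This follows from Lemma~\ref{lem:extension_equation}-type regularity once one notes that $U$ is a classical solution and $L$, $T$, $\mathcal{L}_\sigma$ act within the appropriate domains.
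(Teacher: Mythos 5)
Your argument is correct. For the higher-order problem \eqref{eq:problem1general}, your telescope $W_m=\big(L+\tfrac{1-2(s-[s])}{y}\partial_y+\partial_{yy}\big)^mU$ is exactly the paper's induction (the ``reduction idea'' of Theorem \ref{cor:unique1s2}): each layer is a homogeneous Bessel-type problem with parameter $a=1-2(s-[s])\in(-1,1)$ and vanishing Dirichlet and weighted-Neumann data, killed by Theorem \ref{lem:unique_A_unbounded}. For the second-order problem \eqref{eq:problem2general} you take a mildly different route: the paper uses Lemma \ref{lem:recursion} to show that a classical solution of \eqref{eq:problem2general} is automatically a solution of a problem of the form \eqref{eq:problem1general} (with the data rescaled by $[s]!/([s]-m)!$) and then quotes the first part, whereas you apply the same commutation identity, in the form $B_{s-m}T^m=T^mB_s$, to see directly that $V_m=\big(\tfrac{2}{y}\partial_y\big)^mU$ solves a homogeneous second-order equation with shifted parameter, invoke the base case only once at level $m=[s]$, and then descend by integrating $\tfrac{2}{y}\partial_yV_m=V_{m+1}=0$ and fixing the constant with $u_m=0$. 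Both routes rest on the same two ingredients (the differential identity of Lemma \ref{lem:recursion} and Theorem \ref{lem:unique_A_unbounded}); the small dividend of yours is that it only consumes the conditions $u_0,\dots,u_{[s]}$ and $v_{[s]}$, so it actually exhibits uniqueness for \eqref{eq:problem2general} under $[s]+2$ of the $2[s]+2$ stated conditions. Your closing caveat about the regularity of $W_m$ and $V_m$ is the right one to make; it is precisely what the paper's notion of classical solution is designed to supply, and the paper itself leaves those details to the reader.
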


We start by showing that uniqueness in \eqref{eq:problem2general} fails for $s>1$ if we only impose two initial conditions
in terms of $U$ and $\partial_yU$. For this, we begin by  finding two independent solutions to the following Bessel ODE:
\begin{equation}\label{eq:BesselODE}
\phi''(y)+ \frac{a}{y}\phi'(y)=\lambda\phi(y)\qquad \hbox{for}~y,\lambda>0.
\end{equation}
In the extension problem, $a=1-2s$, so if $s>0$ then $a\in(-\infty,1)$. For this analysis,
we allow any $a<1$, $a\neq0$, and study when we can obtain existence and uniqueness of 
solutions to a typical initial value problem for \eqref{eq:BesselODE} where we prescribe $\phi$
and  $\phi'$ at $y=0$.

Since $y=0$ is a regular singular point for \eqref{eq:BesselODE}, we can find solutions
by applying the classical method of power series expansions as in \cite[Chapter~5]{Simmons}.
The indicial equation is $\rho(\rho-1)+\rho a=0$,
which has roots $\rho_1=0$ and $\rho_2=1-a$. We look for solutions of the form
$\phi_i(y,\lambda)=y^{\rho_i}\sum_{k=0}^\infty a_{i,k}y^k$, $i=1,2$. Inserting these into \eqref{eq:BesselODE} one can find
the recurrence relations for the coefficients $a_{i,k}$ and get that
$$\phi_1(y,\lambda)=\Gamma(1-\tfrac{1-a}{2})\sum_{k=0}^\infty\frac{y^{2k}\lambda^k}{2^{2k}\Gamma(k+1)\Gamma(k+1-\frac{1-a}{2})}$$
and
$$\phi_2(y,\lambda)=\frac{\Gamma(\tfrac{1-a}{2})}{2}y^{1-a}\sum_{k=0}^\infty\frac{y^{2k}\lambda^k}{2^{2k}\Gamma(k+1)\Gamma(k+1+\frac{1-a}{2})}$$
are two independent solutions. By taking into account the power
series expansions of the Bessel functions $I_{\pm\nu}$ (see \cite[p.~108]{Lebedev}),
$$\phi_1(y,\lambda)=\Gamma(1-\tfrac{1+a}{2})(\lambda^{1/2} y/2)^{\frac{1-a}{2}}I_{-\frac{1-a}{2}}(\lambda^{1/2} y)$$
and
$$\phi_2(y,\lambda)=\frac{\Gamma(\tfrac{1-a}{2})}{2}(2\lambda^{-1/2}y)^{\frac{1-a}{2}} I_{\frac{1-a}{2}}(\lambda^{1/2} y).$$
Next, using the power series expansions above, it follows that, as $y\to0$,
\begin{equation}\label{eq:boundaryconditionsBessel}
\begin{cases}
\phi_1(y,\lambda)\to1,&\partial_y\phi_1(y,\lambda)\sim-\frac{\lambda}{a}y,\\
\phi_2(y,\lambda)\sim\frac{1}{1-a}y^{1-a},&\partial_y\phi_2(y,\lambda)\sim y^{-a}.
\end{cases}
\end{equation}

\begin{lem}\label{lem:Besselivp}
Fix $\lambda>0$, $a<1$ and $b\in\R$. The initial value problem
$$\begin{cases}
\phi''(y)+ \frac{a}{y}\phi'(y)=\lambda\phi(y)&\hbox{for}~y>0\\
\lim_{y\to0}\phi(y)=\alpha\\
\lim_{y\to0}y^b\phi'(y)=\beta
\end{cases}$$
has at most one classical solution for arbitrary $\alpha,\beta\in\R$ if and only if $a=b\in[-1,1)$.
\end{lem}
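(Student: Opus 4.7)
The plan is to reduce the initial value problem to a $2\times 2$ linear system for the coefficients of the general solution and to determine the pairs $(a,b)$ for which the coefficient-to-data map $(c_1,c_2)\mapsto(\alpha,\beta)$ is a bijection of $\mathbb{R}^2$. Since \eqref{eq:BesselODE} is a second order linear ODE with $y=0$ as its only, regular, singular point, its solution space is two dimensional and spanned by the fundamental pair $\phi_1(y,\lambda),\phi_2(y,\lambda)$ exhibited just before the lemma, so every classical solution has the form $\phi(y)=c_1\phi_1+c_2\phi_2$ for some $c_1,c_2\in\mathbb{R}$. Using \eqref{eq:boundaryconditionsBessel} together with $1-a>0$, one reads off $\alpha=\lim_{y\to 0}\phi(y)=c_1$ at once. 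The entire content of the lemma then hinges on the second limit $\lim_{y\to 0}y^b\phi'(y)=c_1\lim_{y\to 0}y^b\phi_1'+c_2\lim_{y\to 0}y^b\phi_2'$, whose two contributions are asymptotic to $y^{b+1}$ and $y^{b-a}$ respectively; each of them has three possible outcomes---vanishing, finite nonzero, or divergent---controlled entirely by the signs of the two exponents $b+1$ and $b-a$.

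For the sufficiency direction I would fix $a=b\in[-1,1)$. Then $b-a=0$ and $b+1\geq 0$, so both limits exist for every choice of $c_1,c_2$: one is $1$, and the other is a finite constant $K$ (equal to $0$ when $a>-1$ and to a nonzero constant when $a=-1$). This yields $\beta=Kc_1+c_2$, which combined with $\alpha=c_1$ is an upper triangular linear bijection $\mathbb{R}^2\to\mathbb{R}^2$ of unit determinant, so $(c_1,c_2)=(\alpha,\beta-K\alpha)$ is the unique classical solution for arbitrary data. For the necessity direction I would inspect each complementary configuration. If $b>a$, then $\phi_2$ itself is a nontrivial classical solution carrying the zero data $(\alpha,\beta)=(0,0)$ because $\phi_2\to 0$ and $y^b\phi_2'\sim y^{b-a}\to 0$, directly contradicting uniqueness. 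If $b<a$, then $y^b\phi_2'$ diverges and every classical solution must have $c_2=0$, so the admissible image of the data map collapses to a line $\{(c_1,Kc_1):c_1\in\mathbb{R}\}$ and arbitrary $(\alpha,\beta)$ cannot be prescribed. Finally, if $a=b<-1$, then $y^b\phi_1'$ diverges, forcing $c_1=0$, so every classical solution has $\alpha=0$ and no solution exists for $\alpha\neq 0$. These three cases exhaust the complement of $\{a=b\in[-1,1)\}$ in $\{a<1\}$.

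The principal obstacle is the careful bookkeeping of the three outcomes for each of the two asymptotic exponents, and above all the correct reading of the phrase ``at most one classical solution for arbitrary $\alpha,\beta\in\mathbb{R}$'' as the requirement that the coefficient-to-data map be a genuine bijection of $\mathbb{R}^2$, not merely an injection on a possibly proper invariant subset. It is the combination of the existence-of-limits requirements $b+1\geq 0$ and $b-a\geq 0$ with the nondegenerate-$c_2$-dependence requirement $b-a=0$ that selects the joint condition $a=b\in[-1,1)$ rather than the looser inequality $b\leq a$ alone; this is exactly what the breakdown in the three subcases of the necessity part is designed to confirm.
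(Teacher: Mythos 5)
Your proposal is correct and follows essentially the same route as the paper: write the general solution as $c_1\phi_1+c_2\phi_2$, read off $c_1=\alpha$, and run the case analysis on the asymptotics $y^b\phi'(y)\sim C c_1 y^{1+b}+c_2y^{b-a}$ from \eqref{eq:boundaryconditionsBessel}, with the same treatment of the borderline cases $b\gtrless a$ and $a=b=-1$. You also make explicit the same reading of ``at most one solution for arbitrary data'' (namely well-posedness of the coefficient-to-data map, so that failure of existence in the cases $b<a$ and $a=b<-1$ counts against the condition) that the paper's proof implicitly adopts.
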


\begin{proof}
Suppose that $a\neq0$. The general solution is $\phi=c_1\phi_1+c_2\phi_2$, where $c_1,c_2\in\R$. Since $\phi(0)=c_1$
and $1-a>0$, it follows from \eqref{eq:boundaryconditionsBessel} that $\phi=\alpha\phi_1+c_2\phi_2$, where $c_2$ is arbitrary.
Next, using again \eqref{eq:boundaryconditionsBessel},
$$y^b\phi'(y)\sim-\frac{\alpha\lambda}{a}y^{1+b}+c_2y^{b-a}\qquad\hbox{as}~y\to0.$$
If $b<a$ this limit blows up unless $c_2=0$, but then we are forced to take $\beta=0$ if $b>-1$ or $\beta=-\frac{\alpha\lambda}{a}$ if $b=-1$.
If $b>a$, the second term vanishes in the limit independently of $c_2$, so there is no uniqueness for all $\alpha$ and $\beta$.
If $b=a$, the first term vanishes if and only if $-1<a<1$ and we can choose $c_2=\beta$ to get the unique solution.
If $b=a={-1}$ then we choose $c_2=\beta-\alpha\lambda$ and uniqueness follows.

When $a=0$, one can use the independent solutions $e^{y\lambda^{1/2}}$ and $e^{-y\lambda^{1/2}}$.
\end{proof}

Recall that $s>0$ is noninteger. Therefore, we focus our attention to \eqref{eq:BesselODE} for $-1<a<1$.
For the nonhomogeneous problem
\begin{equation}\label{eq:BesselODEg}
\phi''(y)+ \frac{a}{y} \phi'(y) = \lambda\phi(y)+g(y)\qquad y,\lambda>0
\end{equation}
where now $a\in(-1,1)$, we apply the method of variation of parameters.
Using the Wroskian for Bessel functions \cite{Lebedev}, a particular solution $\phi_p$ can thus be found as
\begin{equation}\label{eq:formulaforphip}
\phi_p(y,\lambda) =\int^y_0\big(\phi_2(y,\lambda)\phi_1(t,\lambda)-\phi_1(y,\lambda)\phi_2(t,\lambda)\big)g(t)t^a\,dt.
\end{equation}
It is easy to verify that
\begin{equation}\label{eq:Neumannphip}
\lim_{y\to0}\phi_p(y,\lambda)=\lim_{y\to0}y^a\phi_p'(y,\lambda)=0.
\end{equation}

Since the power series representations of $\phi_1$ and $\phi_2$ have infinite radius of convergence,
we can replace $\lambda$ by any bounded linear operator $T$ on $X$ to get well-defined $X$-valued functions of $y$:
$$\phi_1(y,T)u=\Gamma(1-\tfrac{1-a}{2})\sum_{k=0}^\infty\frac{y^{2k}}{2^{2k}\Gamma(k+1)\Gamma(k+1-\frac{1-a}{2})}T^ku$$
and
$$\phi_2(y,T)v=\frac{\Gamma(\tfrac{1-a}{2})}{2}y^{1-a}\sum_{k=0}^\infty\frac{y^{2k}}{2^{2k}\Gamma(k+1)\Gamma(k+1+\frac{1-a}{2})}T^kv.$$
Clearly, these are classical solutions to the $X$-valued Bessel equation \eqref{eq:BesselODE} with $\lambda=T$ and
with corresponding boundary conditions (see \eqref{eq:boundaryconditionsBessel})
$$\begin{cases}
    \lim_{y \to 0}\phi_1(y,T)u = u,& \lim_{y \to 0} y^{a}\phi'_1(y,T)u = 0, \\
    \lim_{y \to 0}\phi_2(y,T)v = 0, & \lim_{y \to 0} y^{a}\phi'_2(y,T)v = v.
\end{cases}$$
Similarly, defining $\phi_p(y,T)u$ by replacing $\lambda$ by $T$ in the formula for $\phi_p$ in \eqref{eq:formulaforphip},
one can verify that $\phi_p(y,T)u$ satisfies \eqref{eq:BesselODEg} and \eqref{eq:Neumannphip} with $T$ in place of $\lambda$.

\begin{lem}\label{lem:unique_A_bounded_nonhomogenous}
Let $T:X\to X$ be a bounded linear operator and fix $f\in C([0,\infty);X)$.
Let $a\in(-1,1)$. Then the problem 
$$\begin{cases}
\partial_{yy}U+\frac{a}{y}\partial_yU=TU+f&\hbox{for}~y>0 \\
\lim_{y\to0}U(y)=u&\hbox{in}~X \\
\lim_{y \to 0} y^a\partial_y U(y)=v&\hbox{in}~X
\end{cases}$$
for $u,v\in X$, has a unique classical solution. If $u=v=0$, the unique solution is
$$U(y)=\int^y_0\big(\phi_2(y,T)\phi_1(t,T)-\phi_1(y,T)\phi_2(t,T)\big)f(t) t^a\,dt.$$
\end{lem}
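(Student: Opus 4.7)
The plan is to split the proof into existence, which follows by assembling the three operator-valued building blocks already prepared above, and uniqueness, which I would reduce to a Volterra integral equation on the difference of two candidate solutions.

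For existence, the function $\phi_1(y,T)u$ solves the homogeneous equation with $\lim_{y\to 0}\phi_1(y,T)u=u$ and $\lim_{y\to 0}y^a\partial_y\phi_1(y,T)u=0$, while $\phi_2(y,T)v$ solves the same homogeneous equation with the complementary boundary data, contributing $v$ to the weighted Neumann trace. The operator-valued particular solution
\[
\Phi_p(y):=\int_0^y\bigl(\phi_2(y,T)\phi_1(t,T)-\phi_1(y,T)\phi_2(t,T)\bigr)f(t)\,t^a\,dt
\]
is well defined, since the integrand is $X$-valued continuous on $(0,y]$ and behaves like $t^a f(t)$ as $t\to 0$ (integrable because $a>-1$), and it satisfies the nonhomogeneous equation together with $\lim_{y\to 0}\Phi_p(y)=\lim_{y\to 0}y^a\Phi_p'(y)=0$, as noted just before the lemma. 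Linearity then shows that $U(y)=\phi_1(y,T)u+\phi_2(y,T)v+\Phi_p(y)$ is a classical solution with the prescribed initial data, and reduces to $\Phi_p$ when $u=v=0$, giving the formula claimed in the lemma.

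For uniqueness, I would let $W$ be the difference of two classical solutions sharing $(u,v,f)$; then $W$ solves the homogeneous equation with $\lim_{y\to 0}W(y)=0$ and $\lim_{y\to 0}y^a W'(y)=0$. Rewriting the ODE in Sturm--Liouville form as $\partial_y(y^a\partial_y W)=y^a TW$ and integrating from $0$ to $y$ (the boundary contribution at $0$ vanishes by the Neumann-type initial condition) gives
\[
y^a\partial_yW(y)=T\int_0^y t^a W(t)\,dt.
\]
Dividing by $y^a$ and integrating once more from $0$ to $y$, using $W(0)=0$ and Fubini, leads to the Volterra integral equation
\[
W(y)=T\int_0^y\frac{y^{1-a}-t^{1-a}}{1-a}\,t^a\,W(t)\,dt.
\]
On every compact interval $[0,R]$ the kernel is dominated by $\frac{R^{1-a}}{1-a}\,t^a$, which is integrable at $0$, so a standard Picard--Volterra iteration (equivalently, a Gr\"onwall argument applied to $y\mapsto\int_0^y t^a\|W(t)\|_X\,dt$) propagates the vanishing from $y=0$ and forces $W\equiv 0$ on $[0,R]$, hence on $[0,\infty)$.

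The main obstacle is the singular endpoint $y=0$. The divergence-form rewriting converts the singular coefficient $a/y$ into the weight $y^a$, which is locally integrable precisely when $a\in(-1,1)$; the two zero traces $W(0)=0$ and $\lim_{y\to 0}y^a W'(y)=0$ are exactly what is needed to discard the boundary contributions from the two successive integrations, so the well-posedness on the Banach space side mirrors the scalar uniqueness criterion of Lemma~\ref{lem:Besselivp}.
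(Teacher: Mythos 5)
Your proposal is correct and follows essentially the same route as the paper: existence by superposing $\phi_1(y,T)u$, $\phi_2(y,T)v$ and the variation-of-parameters term constructed before the lemma, and uniqueness by rewriting the equation in divergence form $\partial_y(y^a\partial_yW)=y^aTW$, integrating twice to obtain the Volterra identity $W(y)=\frac{1}{1-a}\int_0^y(y^{1-a}-t^{1-a})TW(t)\,t^a\,dt$, and concluding with Gr\"onwall. No gaps.
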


\begin{proof}
For uniqueness, we use Gronwall's inequality as in \cite{Meichsner-Seifert}. By linearity,
it is enough to assume that $u=v=f=0$. Notice that the equation for $U$ can be written as
$$\partial_y\big(y^a\partial_yU\big)=y^{a}TU.$$
Integrating this twice, using the initial conditions and Fubini's theorem, we get
$$U(y)=\frac{1}{1-a}\int_0^y\big(y^{1-a}-t^{1-a}\big)TU(t)t^a\,dt.$$
Since $T$ is bounded,
$$\|U(y)\|_X\leq\frac{\|T\|}{1-a}\int_0^y\big(y^{1-a}-t^{1-a}\big)\|U(t)\|_Xt^a\,dt$$
and Gronwall's inequality implies that $U=0$.
\end{proof}

Clearly, the proof of Lemma \ref{lem:unique_A_bounded_nonhomogenous} fails when $T$ is unbounded.
However, the result is still true for $-1<a<1$ in the unbounded case (see Theorem \ref{lem:unique_A_unbounded})
as was shown in \cite{Stinga-Torrea} for Hilbert spaces and in \cite{Meichsner-Seifert} for Banach spaces.
We next revisit the proof of the general case of \cite{Meichsner-Seifert} in our context
and provide some of the missing details where necessary.

To this end, we need to introduce sectorial operators and their functional calculus, see \cite{Hasse,Martinez-book}.
A sector in the complex plane of angle $\omega\in(0,\pi]$ is defined as
$S_\omega= \{z \in \mathbb{C}: z \neq 0, |\arg z|< w \}$, and we set $S_0= (0,\infty)$.
A linear operator $(A,D(A))$ on $X$ is called sectorial if there is an angle $\omega\in[0,\pi)$ such that
$\sigma(A) \subseteq S_\omega$ and 
$\sup\{\|\lambda(\lambda I - A)^{-1}\| : \lambda \in \C \setminus \overline{S_{\omega'}} \} < \infty$, for all $\omega' \in (\omega,\pi)$.
For such an operator $A$, $\omega_A = \min \{0 \leq\omega < \pi : A \in S_\omega \}$ is
the angle of sectoriality of $A$. 

It is well known that every closed nonnegative operator is a sectorial operator and any sectorial operator is nonnegative.
If $L$ is the generator of a uniformly bounded $C_0$-semigroup then $A=-L$ is sectorial, with $\omega_A\leq\pi/2$.

We say that a meromorphic function $F$ on a sector $S_\omega$ has a
finite polynomial limit $c\in\C$ at $z=0$ if $F(z)-c=O(|z|^\alpha)$, as $z\to0$, for some $\alpha>0$.
Similarly, $F$ is said to have a finite polynomial limit $d\in\C$ at $\infty$ if $F(z^{-1})$ has polynomial limit $d$ at $0$. 

Let $A$ be a sectorial operator with angle of sectoriality $\omega_A$.
For bounded, holomorphic functions $F$ on an open sector containing $S_{\omega_A}$ that have finite polynomial limits both 
at $0$ and at $\infty$, there is a well defined primary functional calculus,
namely, the operator $F(A)$ is defined via a Cauchy integral over an
open sector containing $S_{\omega_A}$, see \cite[Lemma 2.3.2]{Hasse}.
 
A sequence $\{A_n\}_{n\geq1}$ of sectorial operators is called uniformly sectorial
with angle $\omega\in[0,\pi)$ if $A_n$ is sectorial with angle of sectoriality $\omega$
and $\sup_n\sup\{\|\lambda(\lambda I-A)^{-1}\|:\lambda\in\C\setminus\overline{S_{\omega'}}\}<\infty$
for every $\omega'\in(\omega,\pi)$. A uniformly sectorial sequence $\{A_n\}_{n\geq1}$ with angle $\omega$
is a sectorial approximation of a sectorial operator $A$ with the same angle of sectoriality
if $\lambda\in\rho(A)$ and $(\lambda I-A_n)^{-1}\to(\lambda I-A)^{-1}$ in the operator norm,
for some $\lambda\notin\overline{S_\omega}$. In this case, this is in fact true for all $\lambda\notin\overline{S_\omega}$.
Moreover, $F(A_n) \to F(A)$ in operator norm,
whenever $F(A)$ is defined by the primary functional calculus for $A$, see \cite[Lemma 2.6.7]{Hasse}.
 
\begin{thm}[Uniqueness for $a\in(-1,1)$]\label{lem:unique_A_unbounded}
Let $L$ be the infinitesimal generator of a uniformly bounded $C_0$-semigroup $\{e^{tL}\}_{t\geq0}$ on  $X$.
Assume that $0\in\rho(L)$. Let $a\in(-1,1)$.
Then the second order initial value problem
$$\begin{cases}
LU+\frac{a}{y}\partial_y U+\partial_{yy}U=0&\hbox{for}~y>0\\
\lim_{y\to0}U(y)=u&\hbox{in}~X \\
\lim_{y \to0}y^a\partial_yU(y)=v&\hbox{in}~X,
\end{cases}$$
for $u,v\in X$, has at most one classical solution. 
 \end{thm}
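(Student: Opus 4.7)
By linearity, it suffices to take $u=v=0$ and show that any classical solution $U$ is identically zero. First I rewrite the equation in divergence form, $\partial_y(y^a\partial_yU)=y^aAU$ with $A:=-L$. Integrating from $0$ to $y$ and using $\lim_{y\to0}y^a\partial_yU=0$ gives $y^a\partial_yU(y)=\int_0^y t^aAU(t)\,dt$; dividing by $y^a$, integrating again from $0$ to $y$ using $U(0)=0$, and applying Fubini produces the Volterra integral equation
\[
U(y)=\frac{1}{1-a}\int_0^y t^a(y^{1-a}-t^{1-a})\,AU(t)\,dt,\qquad y>0.
\]
All the integrals make sense as Bochner integrals because $U\in C([0,\infty);X)\cap C((0,\infty);D(A))$ with $y^a\partial_yU$ extending continuously to $y=0$ with value $0$.

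If $A$ were bounded, taking norms and invoking Gronwall's inequality exactly as in Lemma \ref{lem:unique_A_bounded_nonhomogenous} would immediately force $U\equiv 0$. The whole difficulty here is that $A$ is unbounded, so $\|AU(t)\|_X$ cannot be controlled by $\|U(t)\|_X$. To bypass this, following \cite{Meichsner-Seifert}, I use the sectorial functional calculus of \cite{Hasse}. Since $L$ generates a uniformly bounded $C_0$-semigroup, $A$ is sectorial of angle $\omega_A\leq\pi/2$; the assumption $0\in\rho(L)$ gives $0\in\rho(A)$. The Yosida approximants $A_n:=nA(nI+A)^{-1}$ are bounded sectorial operators of the same angle and, by \cite[Lemma~2.6.7]{Hasse}, form a sectorial approximation of $A$ in the sense that $F(A_n)\to F(A)$ in operator norm for any function $F$ to which the primary sectorial functional calculus applies.

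The next step is to regularize $U$ by setting $U_n(y):=n(nI+A)^{-1}U(y)$. Since the resolvent $n(nI+A)^{-1}$ is bounded and commutes both with $A$ and with $\partial_y$, $U_n$ takes values in $D(A)$, satisfies the same Bessel equation with the same vanishing initial data, and therefore also the same Volterra identity. The point is that $AU_n=A_nU=n(U-U_n)$, so the Volterra equation for $U_n$ collapses to
\[
U_n(y)+\frac{n}{1-a}\int_0^y t^a(y^{1-a}-t^{1-a})U_n(t)\,dt=\frac{n}{1-a}\int_0^y t^a(y^{1-a}-t^{1-a})U(t)\,dt,
\]
a scalar-kernel Volterra equation of the second kind involving only the bounded operator $A_n$. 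For each fixed $n$ the bounded uniqueness of Lemma \ref{lem:unique_A_bounded_nonhomogenous}, combined with the representation furnished by the bounded Bessel operators $\phi_1(y,A_n),\phi_2(y,A_n)$ (whose defining power series converge in operator norm because $A_n$ is bounded) applied to the vanishing regularized data, identifies $U_n$ as the zero function. Passing $n\to\infty$ and invoking the Yosida convergence $n(nI+A)^{-1}u\to u$ in $X$ for every $u\in X$ (Theorem \ref{lem:approx_identity} with $\alpha=\beta=0$ and its strong consequence) then yields $U\equiv 0$.

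The main obstacle is making the step ``$U_n\equiv 0$'' rigorous, because the scalar Bessel solutions $\phi_1(y,\lambda)$ and $\phi_2(y,\lambda)$ grow exponentially in $|\lambda|^{1/2}y$ and therefore have no polynomial limit at $\infty$, so they do not lie in the primary sectorial functional calculus class for the unbounded $A$. This forces one to work locally in $y$, to combine the locally uniform sectorial convergence $F(A_n)\to F(A)$ for suitable auxiliary (cut-off) functions $F$ with a Gronwall estimate on the bounded-operator Volterra equation, and to then interchange the local-in-$y$ bound with the sectorial limit $n\to\infty$: these are precisely the details of the \cite{Meichsner-Seifert} scheme that need to be adapted to our setting.
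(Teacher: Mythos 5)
There is a genuine gap, and it sits exactly where you flag "the main obstacle." Your setup (linearity reduction, divergence form, the Volterra identity, Yosida approximants $A_n=nA(nI+A)^{-1}$ and the regularization $U_n=n(nI+A)^{-1}U$) matches the paper's proof. But the claim that for each fixed $n$ the bounded-case lemma "identifies $U_n$ as the zero function" is false, and the argument you give for it is circular: since $n(nI+A)^{-1}$ is injective, $U_n\equiv 0$ for a single $n$ is already equivalent to $U\equiv 0$. The reason the bounded lemma does not give $U_n\equiv0$ is that $U_n$ does \emph{not} solve the homogeneous problem for the bounded operator $A_n$: it solves $\partial_{yy}U_n+\tfrac{a}{y}\partial_yU_n=A_nU_n+f_n$ with the nonzero forcing $f_n=AU_n-A_nU_n=A_n(U-U_n)$. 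The bounded lemma then only yields the representation $U_n(y)=\int_0^y\bigl(\phi_2(y,A_n)\phi_1(t,A_n)-\phi_1(y,A_n)\phi_2(t,A_n)\bigr)f_n(t)t^a\,dt$, and the entire difficulty is to show this tends to $0$ as $n\to\infty$. Your rewritten second-kind Volterra equation does not help either: the kernel carries a factor $n$, so a Gronwall/norm estimate cannot absorb it even though $U-U_n\to0$.

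The missing idea is the paper's damping device. One first replaces $U$ by $(-L)^{-1}U$ to gain enough regularity that $AU\in C([0,\infty);X)$, so that $f_n(y)\to0$ pointwise with a uniform bound. Then, since $\phi_1(y,\cdot)$ and $\phi_2(y,\cdot)$ grow exponentially and are not admissible for the primary sectorial calculus of the unbounded $A$, one multiplies by the Poisson semigroup: the functions $F_i(y,z)=e^{-y_0z}\phi_i(y,z^2)$ \emph{are} bounded holomorphic with polynomial limits on a sector containing $\overline{S_{\pi/4}}$, one proves that $A_n^{1/2}$ is a sectorial approximation of $A^{1/2}$, and then $F_i(y,A_n^{1/2})\to F_i(y,A^{1/2})$ in operator norm. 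Dominated convergence gives $e^{-y_0A^{1/2}}U(y)=0$ for $0<y<y_0$, and injectivity of $e^{-y_0A^{1/2}}$ finishes the proof. None of these steps (the preliminary regularization via $(-L)^{-1}$, the square root, the sectorial approximation of $A^{1/2}$, the injectivity of the Poisson semigroup) appear in your proposal; the vague reference to "cut-off functions $F$" combined with Gronwall does not produce a working mechanism, so the proof as written does not close.
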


\begin{proof}
By linearity, we can assume that $u=v=0$.
Since $L$ is injective, it is enough to prove that $(-L)^{-1}U=0$. Now $(-L)^{-1}U$ satisfies the same equation as $U$, therefore
we may assume that $U\in C((0,\infty);D(L^2)) \cap C([0, \infty); D(L)) \cap C^2((0,\infty);D(L))$.

As $L$ is the infinitesimal generator of a uniformly bounded $C_0$-semigroup,
the operator $A=-L$ is closed, nonnegative and has dense domain $\overline{D(A)}=X$.
For any $\vep>0$, define
$$A_\varepsilon = A(I+\varepsilon A)^{-1}.$$
Since $A$ is nonnegative, it can be seen that $\{A_\vep\}_{\vep>0}$ is a family of bounded,
nonnegative operators, with nonnegativity constant uniformly bounded in $\vep$. Moreover, $0\in\rho(A_\vep)$.
Furthermore, by using \cite[Proposition~2.1.1(f)]{Hasse}, $\{A_\vep\}_{\vep>0}$ is uniformly sectorial.
Finally, it can be proved that $A_\vep$ is a sectorial approximation of $A$.

Next, we define,
$$U_\varepsilon=(I+\varepsilon A)^{-1}U.$$
Since $A$ is nonnegative and $D(A)$ is dense in $X$,
\begin{equation}\label{eq:UepstoU}
\lim_{\vep\to0}U_\vep(y)=U(y)\qquad\hbox{for all}~y\geq0.
\end{equation}
Moreover, $U_\vep$ is a solution to
\begin{equation}\label{eq:regularizedproblem}
\begin{cases}
\partial_{yy}U_\varepsilon+\frac{a}{y}\partial_y U_\varepsilon=A_\varepsilon U_\varepsilon+f_\varepsilon &\hbox{for}~y>0 \\
\lim_{y\to0}U_\varepsilon(y)=0&\hbox{in}~X \\
\lim_{y \to 0}y^a\partial_y U_\varepsilon(y)= 0&\hbox{in}~X
\end{cases}
\end{equation}
where 
$$f_\varepsilon(y)=AU_\varepsilon(y)-A_\varepsilon U_\varepsilon(y).$$
In view of \eqref{eq:UepstoU}, it is enough to prove that, for each $y>0$, $U_\vep(y)\to0$ as $\vep\to0$.

Since $A_\vep$ is a bounded linear operator on $X$ and $f_\vep\in C([0,\infty);X)$, Lemma \ref{lem:unique_A_bounded_nonhomogenous}
implies that the unique classical solution to \eqref{eq:regularizedproblem} is
$$U_\vep(y)=\int^y_0\big(\phi_2(y,A_\vep)\phi_1(t,A_\vep)-\phi_1(y,A_\vep)\phi_2(t,A_\vep)\big)f_\vep(t)t^a\,dt\qquad y>0.$$
Recall that $U\in D(A)$. Then $A(I+\vep A)^{-1}U=(I+\vep A)^{-1}AU$ and we can write
\begin{equation}\label{eq:f_epsilon}
f_\varepsilon= \varepsilon^{-1}(\varepsilon^{-1}+A)^{-1}A(\varepsilon^{-1} +A)^{-1}  (AU)
\end{equation}
From here, using again that $A$ is nonnegative, we conclude that $f_\vep(y)\to 0$, for each $y\geq0$.
Furthermore, since $AU \in C([0,\infty);X)$, and the bounded operators 
$\varepsilon^{-1}(\varepsilon^{-1}+A)^{-1}$ and $A(\varepsilon^{-1} +A)^{-1}$ have norms uniformly bounded in $\vep>0$,
it follows that $f_\varepsilon(y)$ is uniformly bounded in $y\in[0,y_0]$ and $\vep\geq0$.

At this point, we cannot take the limit in \eqref{eq:UepstoU} inside the integral defining $U_\vep$
because, say, $\phi_1(y,A)$ is not well defined
unless stronger conditions on $U$ are imposed. Instead, we consider $e^{-y_0A^{1/2}}U_\vep(y)$, where $y_0>0$ is fixed
and $\{e^{-tA^{1/2}}\}_{t\geq0}$ is the Poisson semigroup associated to $A$ (see \cite{Martinez-book,Yosida}), and prove that
\begin{equation}\label{eq:uniquelimitepsilon}
e^{-y_0A^{1/2}}U(y)=\lim_{\vep\to0}e^{-y_0A_\vep^{1/2}}U_\vep(y)=0\qquad\hbox{for all}~0<y<y_0.
\end{equation}
Thus, by the injectivity of $e^{-y_0A^{1/2}}$, the desired conclusion $U=0$ follows.

To this end, fix $0<y<y_0$ and define the holomorphic functions
$$F_1(y,z)=e^{-y_0z}\phi_1(y,z^2)=e^{-y_0z}\Gamma(1-\tfrac{1-a}{2})
\sum^{\infty}_{k=0}\frac{y^{2k}z^{2k}}{2^{2k}\Gamma(k+1)\Gamma(k+1-\frac{1-a}{2})}$$
and
$$F_2(y,z)=e^{-y_0z}\phi_2(y,z^2)=e^{-y_0z}\frac{\Gamma(\frac{1-a}{2})}{2}y^{1-a}
\sum^{\infty}_{k=0} \frac{y^{2k}z^{2k}}{2^{2k}\Gamma(k+1)\Gamma(k+1+\frac{1-a}{2})}.$$
Then we can write
$$e^{-y_0A_\vep^{1/2}}U_\vep(y)=\int^y_0 \big(F_2(y,A^{1/2}_\vep)F_1(t,A^{1/2}_\vep)-
F_1(y,A^{1/2}_\vep)F_2(t,A^{1/2}_\vep)\big)f_\vep(t)t^a\,dt.$$
Using the power series representations (or, equivalently, the asymptotic expansions of Bessel functions \cite{Lebedev}),
it is not difficult to check that $F_1(y,z)$ and $F_2(y,z)$ are holomorphic and bounded on proper subsectors of $\Re(z)>0$,
and have finite polynomial limits when approaching $0$ and $\infty$ from within $\Re(z)>0$. If we prove that $A_\vep^{1/2}$ is a sectorial approximation 
of angle $\omega\leq\pi/4$ (note that $\overline{S_{\pi/4}}$ is still a proper subsector of $\Re(z)>0$) of $A^{1/2}$ then, by the primary functional calculus,
$$F_1(y,A^{1/2}_\vep)\to F_1(y,A^{1/2})\qquad\hbox{and}\qquad F_2(y,A^{1/2}_\vep)\to F_2(y,A^{1/2})$$
as $\vep\to0$, where both $F_1(y,A^{1/2})$ and $F_2(y,A^{1/2})$ are now bounded operators.
Therefore, the dominated convergence theorem can be applied and \eqref{eq:uniquelimitepsilon} follows.

We are left to show that $A_\vep^{1/2}$ is a sectorial approximation of $A^{1/2}$.
We want to prove that 
$(\lambda I + A^{1/2}_\vep)^{-1} \to (\lambda I+ A^{1/2})^{-1}$ given that
$(\lambda I + A_\vep)^{-1} \to (\lambda I + A)^{-1}$.
By \cite[Proposition 5.3.2 and Theorem 5.4.1]{Martinez-book}, $A^{1/2}$ is a sectorial operator with angle of sectoriality $\omega_A/2$,
where $\omega_A$ is the angle of sectoriality of $A$.
Since $A_\vep$ is a nonnegative operator with nonnegativity constant uniform in $\vep$, so does $A^{1/2}_\vep$.
Hence  $(\lambda I+A^{1/2}_\vep)^{-1}$ exists for any $\lambda>0$ and
$$(\lambda I+ A^{1/2}_\vep)^{-1} = \lambda^{-1}(A^{-1/2}_\vep+\lambda^{-1}I)^{-1} A^{-1/2}_\vep.$$
Next, we see that $(A^{-1/2}_\vep+\lambda^{-1}I) \to (A^{-1/2}+\lambda^{-1}I)$
in norm as $\vep \to 0$, so that $(A^{-1/2}_\vep+\lambda^{-1}I)^{-1} \to (A^{-1/2}+\lambda^{-1}I)^{-1}$ as $\vep \to 0$.
Combining all together, we have that $(\lambda I+ A^{1/2}_\vep)^{-1} \to (\lambda I+ A^{1/2})^{-1}$
in norm. Therefore, $\{A^{1/2}_\vep\}_{\vep>0}$ is a sectorial approximation of $A^{1/2}$.
\end{proof}

Before proving Theorem \ref{thm:uniquenessgeneral}, will need the following differential identities lemma.

\begin{lem}\label{lem:recursion}
Let $L$ be a linear operator on $X$ and $a\in\R$. The differential identity
$$\big(L+\tfrac{a}{y}\partial_y+\partial_{yy}\big)\big(\tfrac{2}{y}\partial_y\big)=
\big(\tfrac{2}{y}\partial_y\big)\big(L+\tfrac{a-2}{y}\partial_y+\partial_{yy}\big)$$
holds. Moreover, if $U$ is a sufficiently smooth function that satisfies the equation
$$LU+\tfrac{a}{y}\partial_yU+\partial_{yy}U=0$$
then, for any $m\geq0$,
$$\big(L+\tfrac{a+2m}{y}\partial_y+\partial_{yy}\big)^{m+1}U=0$$
and, for any $0\leq m\leq n$,
$$\big(L+\tfrac{a+2n}{y}\partial_y+\partial_{yy}\big)^{m}U=\frac{n!}{(n-m)!}\big(\tfrac{2}{y}\partial_y\big)^{m}U.$$
\end{lem}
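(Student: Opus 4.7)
The plan is to prove the three claims in order, with everything reducing to the first commutation identity via induction on $m$. Nothing in the lemma is deep; the whole argument is algebraic bookkeeping of how the drift coefficient $a$ shifts each time one commutes past a copy of $\tfrac{2}{y}\partial_y$.

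First I would verify the base identity
\begin{equation*}
\big(L+\tfrac{a}{y}\partial_y+\partial_{yy}\big)\big(\tfrac{2}{y}\partial_y\big)
=\big(\tfrac{2}{y}\partial_y\big)\big(L+\tfrac{a-2}{y}\partial_y+\partial_{yy}\big)
\end{equation*}
by direct computation on a smooth $U$. Since $L$ acts on $X$ and is $y$-independent, the $L$-terms on both sides coincide with $\tfrac{2}{y}L\partial_y U$ and match trivially. Expanding the remaining compositions produces combinations of $y^{-3}\partial_y U$, $y^{-2}\partial_{yy}U$, and $y^{-1}\partial_{yyy}U$, and a short check shows the shift $a\mapsto a-2$ on the right is precisely what absorbs the extra $y^{-2}$ factor produced when $\partial_y$ hits $\tfrac{1}{y}$. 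Iterating this identity with $a$ replaced successively by $a+2,a+4,\dots$ yields by induction the extended commutation
\begin{equation*}
\big(L+\tfrac{a+2m}{y}\partial_y+\partial_{yy}\big)\big(\tfrac{2}{y}\partial_y\big)^{m}
=\big(\tfrac{2}{y}\partial_y\big)^{m}\big(L+\tfrac{a}{y}\partial_y+\partial_{yy}\big),
\end{equation*}
valid for every $m\geq0$.

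With this in hand, I would prove the factorial identity $\big(L+\tfrac{a+2n}{y}\partial_y+\partial_{yy}\big)^{m}U=\tfrac{n!}{(n-m)!}\big(\tfrac{2}{y}\partial_y\big)^{m}U$ by induction on $m\leq n$ with $n$ fixed. The case $m=0$ is trivial. For the inductive step, apply the operator once more, commute it past the $m$ copies of $\tfrac{2}{y}\partial_y$ via the extended identity (this drops the drift from $a+2n$ down to $a+2(n-m)$), and then use the hypothesis on $U$ to rewrite
\begin{equation*}
\big(L+\tfrac{a+2(n-m)}{y}\partial_y+\partial_{yy}\big)U=\big(LU+\tfrac{a}{y}\partial_y U+\partial_{yy}U\big)+\tfrac{2(n-m)}{y}\partial_y U=(n-m)\tfrac{2}{y}\partial_y U.
\end{equation*}
The resulting factor $n-m$ is exactly what advances the coefficient from $\tfrac{n!}{(n-m)!}$ to $\tfrac{n!}{(n-m-1)!}$, closing the induction.

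Finally, the annihilation claim $\big(L+\tfrac{a+2m}{y}\partial_y+\partial_{yy}\big)^{m+1}U=0$ follows by applying the operator one extra time to the $n=m$ case of the factorial identity: the right side becomes $m!\big(L+\tfrac{a+2m}{y}\partial_y+\partial_{yy}\big)\big(\tfrac{2}{y}\partial_y\big)^{m}U$, which by the extended commutation equals $m!\big(\tfrac{2}{y}\partial_y\big)^{m}\big(LU+\tfrac{a}{y}\partial_y U+\partial_{yy}U\big)=0$. The only mild obstacle in the whole proof is keeping the index shifts straight; once the base identity is nailed down correctly, the rest is mechanical.
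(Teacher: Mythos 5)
Your proof is correct and follows essentially the same route as the paper: both verify the base commutation identity directly and then iterate it, using the equation to replace $\big(L+\tfrac{a}{y}\partial_y+\partial_{yy}\big)U$ by a pure $\tfrac{2}{y}\partial_y$ term at each step. The only difference is organizational — you package the iteration as a standalone extended commutation identity and derive the annihilation claim from the $n=m$ case of the factorial identity, whereas the paper peels off factors one at a time in each computation — but the key identity and the bookkeeping are identical.
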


\begin{proof}
We can write
$$\big(L+\tfrac{a}{y}\partial_y+\partial_{yy}\big)\big(\tfrac{2}{y}\partial_y\big)=
\big(\tfrac{2}{y}\partial_y\big)L+\big(\tfrac{a}{y}\partial_y+\partial_{yy}\big)\big(\tfrac{2}{y}\partial_y\big)$$
and it is easy to check that
$$\big(\tfrac{a}{y}\partial_y + \partial_{yy} \big) \big(\tfrac{2}{y}\partial_y\big)
=\big(\tfrac{2}{y}\partial_y\big)\big(\tfrac{a-2}{y} \partial_y + \partial_{yy} \big).$$

Let $U$ be as in the statement. For any $m\geq0$, by the differential identity,
\begin{align*}
    \big(L+\tfrac{a+2m}{y}\partial_y+\partial_{yy}\big)^{m+1}U &=
    \big(L+\tfrac{a+2m}{y}\partial_y+\partial_{yy}\big)^m\big[\big(L+\tfrac{a}{y}\partial_y+\partial_{yy}\big)U+\big(\tfrac{2m}{y}\partial_y\big)U\big] \\
    &=\big(L+\tfrac{a+2m}{y}\partial_y+\partial_{yy}\big)^m\big(\tfrac{2m}{y} \partial_y \big) U \\
    &= \big(\tfrac{2m}{y} \partial_y\big)\big(L+\tfrac{a+2m-2}{y}\partial_y+\partial_{yy}\big)^mU \\
    &= m\big(\tfrac{2}{y} \partial_y\big)\big(L+\tfrac{a+2m-2}{y}\partial_y+\partial_{yy}\big)^{m-1}\big(\tfrac{2m-2}{y}\partial_y\big)U \\
    &= m(m-1)\big(\tfrac{2}{y} \partial_y\big)^2\big(L+\tfrac{a+2m-4}{y}\partial_y+\partial_{yy}\big)^{m-1}U.
\end{align*}
Continuing this process $m$-times,
$$\big(L+\tfrac{a+2m}{y}\partial_y+\partial_{yy}\big)^{m+1}U=m!\big(\tfrac{2}{y} \partial_y\big)^m\big(L+\tfrac{a}{y}\partial_y+\partial_{yy}\big)U=0.$$
The statement for $0\leq n\leq m$ is proved in a similar way:
\begin{align*}
    \big(L+\tfrac{a+2n}{y}\partial_y+\partial_{yy}\big)^mU 
    &=\big(L+\tfrac{a+2n}{y}\partial_y+\partial_{yy}\big)^{m-1}\big(\tfrac{2n}{y}\partial_y\big)U \\
    &=n\big(L+\tfrac{a+2n}{y}\partial_y+\partial_{yy}\big)^{m-2}\big(L+\tfrac{a+2n}{y}\partial_y+\partial_{yy}\big)\big(\tfrac{2}{y}\partial_y\big)U \\
    &=n\big(L+\tfrac{a+2n}{y}\partial_y+\partial_{yy}\big)^{m-2}\big(\tfrac{2}{y}\partial_y\big)\big(L+\tfrac{a+2n-2}{y}\partial_y+\partial_{yy}\big)U \\
    &=n(n-1)\big(L+\tfrac{a+2n}{y}\partial_y+\partial_{yy}\big)^{m-2}\big(\tfrac{2}{y}\partial_y\big)^2U.
\end{align*}
Continuing the process for $m$-times,
\begin{align*}
\big(L+\tfrac{a+2n}{y}\partial_y+\partial_{yy}\big)^mU &= n(n-1)(n-2)\cdots (n-m+1)\big(\tfrac{2}{y} \partial_y\big)^mU \\
&=\frac{n!}{(n-m)!}\big(\tfrac{2}{y} \partial_y\big)^mU.
\end{align*}
\end{proof}

For the reasons mentioned at the end of the introduction, we believe that stating
and proving Theorem \ref{thm:uniquenessgeneral} for the case when $1<s<2$ will be useful and instructive.

\begin{thm}[Uniqueness for $1<s<2$]\label{cor:unique1s2}
Let $L$ be the infinitesimal generator of a uniformly bounded $C_0$-semigroup $\{e^{tL}\}_{t\geq0}$
on $X$. Assume that $0 \in \rho(L)$. Fix $1<s<2$. Then the fourth order initial value extension problem
\begin{equation}\label{eq:problem1}
\begin{cases}
\big(L+\frac{3-2s}{y}\partial_y+\partial_{yy}\big)^{2}U=0&\hbox{for}~y>0\\
\lim_{y\to0}U(y)=u_1&\hbox{in}~X \\
\lim_{y\to0}y^{3-2s}\partial_yU(y)=u_2&\hbox{in}~X \\
\lim_{y\to0}\big(LU+\frac{3-2s}{y}\partial_yU+\partial_{yy}U\big)= u_3&\hbox{in}~X \\
\lim_{y\to0}y^{3-2s}\partial_y\big(LU+\frac{3-2s}{y}\partial_yU+\partial_{yy}U\big)=u_4&\hbox{in}~X
\end{cases}
\end{equation}
and the second order initial value extension problem
\begin{equation}\label{eq:problem2}
\begin{cases}
LU+\frac{1-2s}{y}\partial_yU+\partial_{yy}U=0&\hbox{for}~y>0\\
\lim_{y\to0}U(y)=u_1&\hbox{in}~X \\
\lim_{y\to0}y^{3-2s}\partial_yU=u_2&\hbox{in}~X \\
\lim_{y\to0}\frac{2}{y}\partial_yU= u_3&\hbox{in}~X \\
\lim_{y\to0}y^{3-2s}\partial_y\big(\frac{2}{y}\partial_yU\big)=u_4&\hbox{in}~X
\end{cases}
\end{equation}
where $u_m\in X$, $1\leq m\leq4$, have at most one classical solution.
 \end{thm}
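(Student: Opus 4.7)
The strategy is to reduce uniqueness for both \eqref{eq:problem1} and \eqref{eq:problem2} to the already-established second-order uniqueness Theorem \ref{lem:unique_A_unbounded}. The key numerical observation is that although $1-2s\in(-3,-1)$ lies outside the range covered by Theorem \ref{lem:unique_A_unbounded}, for $1<s<2$ the shifted weight $3-2s$ does lie in $(-1,1)$, so the theorem applies to the operator $L+\tfrac{3-2s}{y}\partial_y+\partial_{yy}$.

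First I would handle \eqref{eq:problem1}. By linearity it suffices to assume $u_1=u_2=u_3=u_4=0$ and show any classical solution $W$ is identically zero. Setting
$$V:=\big(L+\tfrac{3-2s}{y}\partial_y+\partial_{yy}\big)W,$$
the fourth-order equation becomes the second-order $X$-valued Bessel-type equation
$(L+\tfrac{3-2s}{y}\partial_y+\partial_{yy})V=0$,
with vanishing Cauchy data $\lim_{y\to0}V=u_3=0$ and $\lim_{y\to0}y^{3-2s}\partial_yV=u_4=0$ supplied by the third and fourth initial conditions of \eqref{eq:problem1}. Since $3-2s\in(-1,1)$, Theorem \ref{lem:unique_A_unbounded} gives $V\equiv0$. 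Then $W$ itself satisfies the homogeneous second-order problem with trivial Cauchy data $W(0)=u_1=0$ and $\lim_{y\to0}y^{3-2s}\partial_yW=u_2=0$, and a second application of Theorem \ref{lem:unique_A_unbounded} forces $W\equiv0$.

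Next I would deduce uniqueness for \eqref{eq:problem2} by showing that every classical solution $U$ of \eqref{eq:problem2} is automatically a classical solution of \eqref{eq:problem1} with the same data. Applying Lemma \ref{lem:recursion} to the equation in \eqref{eq:problem2} with $a=1-2s$ and taking $m=n=1$ yields simultaneously the fourth-order PDE $(L+\tfrac{3-2s}{y}\partial_y+\partial_{yy})^2U=0$ and the pointwise identity
$$\big(L+\tfrac{3-2s}{y}\partial_y+\partial_{yy}\big)U=\tfrac{2}{y}\partial_yU.$$
This identity converts the third initial condition of \eqref{eq:problem2} into $\lim_{y\to0}(LU+\tfrac{3-2s}{y}\partial_yU+\partial_{yy}U)=u_3$, and differentiating and multiplying by $y^{3-2s}$ converts the fourth condition into the fourth condition of \eqref{eq:problem1}. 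The first two conditions coincide verbatim. Hence uniqueness for \eqref{eq:problem2} follows at once from the step above.

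The main obstacle is essentially bookkeeping rather than conceptual: one must verify that a classical solution to \eqref{eq:problem2} possesses enough regularity to iterate the second-order operator once more (i.e.\ that the computations inside Lemma \ref{lem:recursion} are justified), but the notion of classical solution spelled out after Theorem \ref{thm:extension-general} already requires continuous differentiability with values in $D(L)$ and, where needed, $D(L^2)$, so the argument goes through. The genuine analytic content is concentrated entirely in Theorem \ref{lem:unique_A_unbounded}; the role of Lemma \ref{lem:recursion} is to provide the algebraic bridge between the second-order and fourth-order formulations.
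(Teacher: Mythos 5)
Your proposal is correct and follows essentially the same route as the paper: reduce \eqref{eq:problem1} to two successive applications of Theorem \ref{lem:unique_A_unbounded} with weight $a=3-2s\in(-1,1)$ via the substitution $V=(L+\tfrac{3-2s}{y}\partial_y+\partial_{yy})U$, and then use Lemma \ref{lem:recursion} with $a=1-2s$, $m=n=1$ to show that a solution of \eqref{eq:problem2} is a solution of \eqref{eq:problem1}. No gaps.
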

 
\begin{proof}
By linearity, it is enough to assume that $u_1=u_2=u_3=u_4=0$.

Let $U$ be a classical solution to \eqref{eq:problem1}.
Then $V=LU+\tfrac{3-2s}{y}\partial_yU+\partial_{yy}U$ solves
$$\begin{cases}
LV+\frac{a}{y}\partial_yV+\partial_{yy}V=0&\hbox{for}~y>0\\
\lim_{y\to0}V(y) =0&\hbox{in}~X \\
 \lim_{y\to0}y^{a}\partial_y V = 0&\hbox{in}~X
\end{cases}$$
where $a=3-2s\in(-1,1)$. By Theorem \ref{lem:unique_A_unbounded}, $V(y)=0$ for all $y\geq0$.
Consequently, $U$ solves
$$\begin{cases}
LU+\frac{3-2s}{y}\partial_yU+\partial_{yy}U = 0 &\hbox{for}~y>0\\
\lim_{y\to0}U(y)=0&\hbox{in}~X \\
\lim_{y\to0}y^{3-2s}\partial_yU=0&\hbox{in}~X.
\end{cases}$$
Applying again Theorem \ref{lem:unique_A_unbounded} with $a=3-2s\in(-1,1)$, $U(y) = 0$ for all $y\geq0$.  

Next, suppose that $U$ is a classical solution to \eqref{eq:problem2}. For all $y>0$, by Lemma \ref{lem:recursion}
with $a=1-2s$ and $m=n=1$, $\big(L+\tfrac{3-2s}{y}\partial_y+\partial_{yy}\big)^2U=0$ and 
$LU+\tfrac{3-2s}{y}\partial_yU+\partial_{yy}U=\tfrac{2}{y}\partial_yU$.
Therefore, $U$ solves \eqref{eq:problem1} with $u_m=0$ so, by the first part of this proof, $U=0$.
\end{proof}

\begin{proof}[Proof of Theorem \ref{thm:uniquenessgeneral}]
With an induction argument using the reduction idea of the proof of Theorem \ref{cor:unique1s2}
and applying Theorem \ref{lem:unique_A_unbounded}, uniqueness for \eqref{eq:problem1general} follows.
Similarly as in the proof of Theorem \ref{cor:unique1s2}, Lemma \ref{lem:recursion} gives
that a classical solution to \eqref{eq:problem2general} is also a solution to a problem of the form \eqref{eq:problem1general},
and so it is unique. Details are left to the reader.
\end{proof}

%%%%%%%%%%%%%%%%%%%%%%%%%%%%%%%%%%%%%%%%%%%%%%%%%%%%%%
\section{The sharp extension problem for $0<s<1$ and $1<s<2$}\label{sec: extension s small}
%%%%%%%%%%%%%%%%%%%%%%%%%%%%%%%%%%%%%%%%%%%%%%%%%%%%%%

In this section we state and prove Theorem \ref{thm:extension-general} for $0<s<1$ and $1<s<2$.
This has a two-fold purpose. First, these are the typical fractional powers used in most applications,
and we believe that presenting them separately will give the reader a better understanding
of the strategy of proof for the general case $s>0$. Second, in both cases we can say something more about
the derivative limits characterizing $(-L)^su$ in terms of difference quotients,
see \eqref{eq:neumanns} and \eqref{eq:U Ls}, that we were not able to generalize to $s>2$.
These latter descriptions are important as they give the correct way of approximating the conormal derivative of $U$
with finite differences, a fundamental aspect to compute $(-L)^su$ numerically using the extension.

\begin{thm}[Extension problem for $0<s<1$]\label{thm: extension frac domain}
Let $L$ be the infinitesimal generator of a uniformly bounded $C_0$-semigroup $\{e^{tL}\}_{t\geq0}$
on $X$. Assume that $0 \in \rho(L)$. For $0<s<1$ and $u\in X$, let $U(y)=U[u](y)$ be its extension as in \eqref{eq:Usemigroup}.
The following statements hold.
\begin{enumerate}[$(a)$]
\item If $u\in D((-L)^s)$ and $f=(-L)^su$ then 
\begin{equation}\label{eq:newidentity}
\begin{aligned}
U(y) &= u+\frac{1}{\Gamma(s)}\int_0^\infty\big(e^{-y^2/(4t)}-1\big)e^{tL}f\,\frac{dt}{t^{1-s}} \\
&= u+\frac{y^{2s}}{4^s\Gamma(s)}\int_0^\infty\big(e^{-r}-1\big)e^{\frac{y^2}{4r}L}f\,\frac{dr}{r^{1+s}}.
\end{aligned}
\end{equation}
\item We have that $u\in D((-L)^s)$ if and only if the limits
$$\lim_{y\to0}y^{1-2s}\partial_yU(y)\qquad\hbox{or}\qquad\lim_{y\to0}\frac{U(y)-u}{y^{2s}}$$
exist in $X$ and, in this case,
\begin{equation}\label{eq:neumanns}
\lim_{y\to0}y^{1-2s}\partial_yU(y)=c_s(-L)^su=2s\lim_{y\to0}\frac{U(y)-u}{y^{2s}}\quad\hbox{in}~X,
\end{equation}
where $c_s=\frac{-\Gamma(1-s)}{4^{s-1/2}\Gamma(s)}$.
\item If $u\in D((-L)^s)$ then its extension $U$ given by \eqref{eq:Usemigroup} is the unique classical solution to the 
second order initial value problem
\begin{equation}\label{eq:extension_problem}
\begin{cases}
LU+\frac{1-2s}{y}\partial_yU+\partial_{yy}U=0&\hbox{for}~y>0\\
\lim_{y\to0}U(y)=u&\hbox{in}~X\\
\lim_{y\to0}y^{1-2s}\partial_yU(y)=c_s(-L)^su&\hbox{in}~X.
\end{cases}
\end{equation}
\end{enumerate}
\end{thm}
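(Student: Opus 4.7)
The plan is to prove (a), (b), (c) in sequence, building on the semigroup tools from Sections~\ref{sec:balakrishnan}--\ref{sec:uniqueness}.

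For (a), I would start from the $\varepsilon$-regularized representation of Theorem~\ref{lem: extension_epsilon_presentation}, $U(y)=\lim_{\varepsilon\to 0}\frac{1}{\Gamma(s)}\int_0^\infty e^{-\varepsilon t}e^{-y^2/(4t)}e^{tL}f\,dt/t^{1-s}$, and split $e^{-y^2/(4t)}=1+(e^{-y^2/(4t)}-1)$. The ``constant~$1$'' piece equals $(\varepsilon I-L)^{-s}f$ by Theorem~\ref{lem:inverse}, and letting $\varepsilon\to 0$ gives $(-L)^{-s}f=u$ by Theorem~\ref{lem:approx_identity} applied to $A=-L$ with $\alpha=\beta=s$ (available since $0\in\rho(L)$). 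The remaining piece is absolutely convergent uniformly in $\varepsilon$: the pointwise bound $|e^{-y^2/(4t)}-1|\leq\min(1,y^2/(4t))$ together with $\|e^{tL}f\|_X\leq M\|f\|_X$ supplies an $\varepsilon$-independent integrable majorant, so dominated convergence passes $\varepsilon\to 0$ inside and yields the first line of \eqref{eq:newidentity}; the substitution $r=y^2/(4t)$ produces the second.

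For the forward direction of (b), I would differentiate the first formula of \eqref{eq:newidentity} to obtain $\partial_y U(y)=-\tfrac{y}{2\Gamma(s)}\int_0^\infty e^{-y^2/(4t)}e^{tL}f\,dt/t^{2-s}$, then multiply by $y^{1-2s}$ and substitute $r=y^2/(4t)$ to get
\begin{equation*}
y^{1-2s}\partial_y U(y) = -\frac{4^{1-s}}{2\Gamma(s)} \int_0^\infty e^{-r}\,e^{\frac{y^2}{4r}L}f\,\frac{dr}{r^{s}}.
\end{equation*}
Since $e^{-r}/r^{s}$ is integrable on $(0,\infty)$ when $0<s<1$, and $e^{\frac{y^2}{4r}L}f\to f$ as $y\to 0$ by $C_0$-continuity, dominated convergence together with $\int_0^\infty e^{-r}\,dr/r^{s}=\Gamma(1-s)$ gives the limit $c_s f=c_s(-L)^s u$. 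The difference quotient limit in \eqref{eq:neumanns} follows analogously from the second line of \eqref{eq:newidentity}, using the identity $\int_0^\infty(e^{-r}-1)\,dr/r^{1+s}=-\Gamma(1-s)/s$ (integration by parts) to produce the factor $c_s/(2s)$.

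The backward direction of (b) is the main obstacle: formula \eqref{eq:newidentity} presupposes $u\in D((-L)^s)$, so one cannot use it to recover $(-L)^s u$ directly from the limit. My plan is a Yosida-type regularization. Assume $v:=\lim_{y\to 0}y^{1-2s}\partial_y U[u](y)$ exists, set $A=-L$, and define $u_\varepsilon=(I+\varepsilon A)^{-1}u$ for $\varepsilon>0$. The bounded operator $(I+\varepsilon A)^{-1}$ commutes with the semigroup, hence with \eqref{eq:Usemigroup}, so $U[u_\varepsilon](y)=(I+\varepsilon A)^{-1}U[u](y)$ and therefore $\lim_{y\to 0} y^{1-2s}\partial_y U[u_\varepsilon](y)=(I+\varepsilon A)^{-1}v$. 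But $u_\varepsilon\in D(A)\subseteq D(A^s)$, so the forward direction (already proved) applies and shows this same limit equals $c_s A^s u_\varepsilon$; hence $A^s u_\varepsilon=c_s^{-1}(I+\varepsilon A)^{-1}v$. Sending $\varepsilon\to 0$, nonnegativity of $A$ and density of $D(A)$ in $X$ yield $u_\varepsilon\to u$ and $(I+\varepsilon A)^{-1}v\to v$ in $X$, and closedness of $A^s$ forces $u\in D(A^s)$ with $A^s u=c_s^{-1}v$, as required. The second limit characterization is treated identically. Part (c) then combines everything: by Lemma~\ref{lem:extension_equation}(4) and part (b), $U$ from \eqref{eq:Usemigroup} is a classical solution of \eqref{eq:extension_problem}, and since $1-2s\in(-1,1)$ for $0<s<1$, Theorem~\ref{lem:unique_A_unbounded} delivers uniqueness.
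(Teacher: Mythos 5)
Your proof is correct, and parts (a), the forward direction of (b), and (c) follow the paper's own argument essentially verbatim: split off the $(\varepsilon I-L)^{-s}f$ piece from the regularized representation of Theorem~\ref{lem: extension_epsilon_presentation}, pass $\varepsilon\to0$ by dominated convergence using Theorems~\ref{lem:approx_identity} and~\ref{lem:inverse}, differentiate under the integral and change variables for the Neumann limit, and invoke Lemma~\ref{lem:extension_equation} together with Theorem~\ref{lem:unique_A_unbounded} for (c). Your constants check out: $4^{1-s}/2=4^{1/2-s}$ recovers $c_s=-\Gamma(1-s)/(4^{s-1/2}\Gamma(s))$, and the integration-by-parts identity $\int_0^\infty(e^{-r}-1)r^{-1-s}\,dr=-\Gamma(1-s)/s$ gives the factor $c_s/(2s)$.

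The one place where you genuinely diverge from the paper is the converse of (b). The paper applies the single bounded operator $(-L)^{-s}$ to the hypothesized limit: writing $g=T_su$, it uses $(-L)^{-s}U[u](y)=U[(-L)^{-s}u](y)$ and the already-proved forward direction for $v=(-L)^{-s}u\in D((-L)^s)$ to obtain the exact identity $(-L)^{-s}g=\tfrac{c_s}{2s}\,u$, whence $u\in R((-L)^{-s})=D((-L)^s)$ with no further limiting procedure. You instead regularize with $u_\varepsilon=(I+\varepsilon A)^{-1}u\in D(A)\subset D(A^s)$, apply the forward direction to each $u_\varepsilon$, and then pass $\varepsilon\to0$ using closedness of $A^s$. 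Both routes are valid and rest on the same commutation principle (a bounded operator commuting with the semigroup passes through $U[\cdot]$ and through the limit $y\to0$); the paper's version is slightly more economical since it needs no closedness argument and no second limit, while yours has the mild advantage of only ever invoking the forward direction on the dense, concretely described set $D(A)$ rather than on $D(A^s)$ itself. Either way the hypothesis $0\in\rho(L)$ is what makes the relevant bounded operator ($(-L)^{-s}$ in the paper's case) available, and your argument also uses it implicitly through the forward direction, so nothing is lost.
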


\begin{proof}
For $(a)$, we point out that \eqref{eq:newidentity} was proved in \cite{Gale-Miana-Stinga}
for the case $u\in D(L)$. The same idea works if we suppose that $u\in D((-L)^s)$ because
of Theorem \ref{lem:approx_identity}. Indeed, 
let $f=(-L)^su$. By Theorems \ref{lem:approx_identity} and \ref{lem:inverse},
$$u=(-L)^{-s}f=\lim_{\varepsilon\to0}(\varepsilon I-L)^{-s}f=\lim_{\varepsilon\to0}\frac{1}{\Gamma(s)}\int_0^\infty
e^{-\varepsilon t}e^{tL}f\,\frac{dt}{t^{1-s}}.$$
Using this and Theorem \ref{lem: extension_epsilon_presentation},
\begin{align*}
    U(y) -u &= \lim_{\varepsilon \to 0}\frac{1}{\Gamma(s)}\int_0^\infty e^{-\varepsilon t}\big(e^{-y^2/(4t)}-1\big)e^{tL}f\,\frac{dt}{t^{1-s}} \\
    &= \frac{1}{\Gamma(s)}\int_0^\infty\big(e^{-y^2/(4t)}-1\big)e^{tL}f\,\frac{dt}{t^{1-s}},
\end{align*}
where in the last line we applied the dominated convergence theorem. The change
of variables $r=y^2/(4t)$ gives the second formula in \eqref{eq:newidentity}.

Let us consider $(b)$. Suppose first that $u\in D((-L)^s)$ and let $f=(-L)^su$.
By \eqref{eq:newidentity} and the dominated convergence theorem,
 $$\lim_{y\to0}\frac{U(y) - u}{y^{2s}}=\lim_{y\to0}\frac{1}{4^s\Gamma(s)}\int^\infty_0   (e^{-r} - 1) e^{\frac{y^2}{4r}L}f \, \frac{dr}{r^{1+s}}
 =\frac{c_s}{2s}f.$$
Similarly, after differentiating inside the integral sign in the first identity in \eqref{eq:newidentity}
and changing variables, we get
$$\lim_{y\to0}y^{1-2s}\partial_yU(y)=-\lim_{y\to0}\frac{1}{4^{s-1/2}\Gamma(s)}\int_0^\infty e^{-r}e^{\frac{y^2}{4r}L}f\,\frac{dr}{r^{s}}
= c_sf.$$
For the converse in $(b)$, let $U[v](y)$ denote the extension of $v\in X$ as in \eqref{eq:Usemigroup} and define
$$T_sv:=\lim_{y\to0}\frac{U[v](y)-v}{y^{2s}}$$
with domain $D(T_s) =\big\{v\in X:T_sv~\hbox{exists}\big\}$.
We just proved that $D((-L)^s)\subset D(T_s)$. To show the opposite inclusion,
let $u\in D(T_s)$ and set $g=T_su\in X$.
Since $(-L)^{-s}$ is a bounded linear operator that commutes with $e^{tL}$,
it is clear that $(-L)^{-s}U[u](y)=U[(-L)^{-s}u](y)$ and
$$(-L)^{-s}g= (-L)^{-s}[T_su]=T_s[(-L)^{-s}u].$$
But now, by the direct statement we just proved applied to $v:=(-L)^{-s}u\in D((-L)^s)$,
$$T_s[(-L)^{-s}u]=T_s[v]=\frac{c_s}{2s}(-L)^sv=\frac{c_s}{2s}(-L)^s[(-L)^{-s}u]=\frac{c_s}{2s}u.$$
Hence, $(-L)^{-s}g=\frac{c_s}{2s}u$ and $u\in D((-L)^s)$, as desired.
A similar argument using the identity $(-L)^{-s}\big(y^{1-2s}\partial_yU[u](y)\big)=y^{1-2s}\partial_yU[(-L)^{-s} u](y)$ can be done
to establish that if the limit $\lim_{y\to0}y^{1-2s}\partial_yU[u](y)$ exists then $u\in D((-L)^s)$.

Finally, $(c)$ follows from Lemma \ref{lem:extension_equation} and Theorem \ref{lem:unique_A_unbounded}.
\end{proof}

\begin{thm}[Extension problem for $1<s<2$]\label{thm:s in between 1 and 2}
Let $L$ be the infinitesimal generator of a uniformly bounded $C_0$-semigroup $\{e^{tL}\}_{t\geq0}$
on $X$. Assume that $0 \in \rho(L)$. For $1<s<2$ and $u\in X$, let $U(y)=U[u](y)$ be its extension as in \eqref{eq:Usemigroup}.
The following statements hold.
\begin{enumerate}[$(a)$]
\item If $u\in D((-L)^s)$ and $f=(-L)^su$ then
\begin{equation}\label{eq:U component s large}
\begin{aligned}
U(y) &= u+\frac{y^2 \Gamma(s-1)}{4 \Gamma(s)}Lu+\frac{1}{\Gamma(s)}\int^\infty_0
\bigg[e^{-y^2/(4t)}-1+\frac{y^2}{4t}\bigg]e^{tL} f\,\frac{dt}{t^{1-s}}  \\
&= u+\frac{y^2 \Gamma(s-1)}{4 \Gamma(s)}Lu+\frac{y^{2s}}{4^s\Gamma(s)}\int^\infty_0
\big(e^{-r}-1+r\big)e^{\frac{y^2}{4r}L}f\,\frac{dr}{r^{1+s}}.
\end{aligned}
\end{equation}
\item We have that $u\in D((-L)^s)$ if and only if the limits
\begin{equation}\label{eq:1s2incremental}
\lim_{y\to0}y^{3-2s}\partial_y \big(\tfrac{2}{y} \partial_y U(y) \big)\qquad\hbox{or}\qquad
\lim_{y \to 0}\frac{U(2y) - 4 U(y) + 3u}{y^{2s}}
\end{equation}
or
$$\lim_{y \to 0}y^{3-2s}\partial_y \big( LU +\tfrac{3-2s}{y} \partial_y U + \partial_{yy}U \big)$$
exist in $X$ and, in these cases,
\begin{equation}\label{eq:U Ls}
\lim_{y\to0^+}y^{3-2s}\partial_y \big(\tfrac{2}{y}\partial_yU(y)\big)=\lim_{y \to 0}y^{3-2s}\partial_y \big( LU +\tfrac{3-2s}{y} \partial_y U + \partial_{yy}U \big)=c_s(-L)^su
\end{equation}
and
\begin{equation}\label{eq:limincremental1s2}
\lim_{y \to 0}\frac{U(2y) - 4 U(y) + 3U(0)}{y^{2s}}=d_s(-L)^su
\end{equation}
where $c_s=\frac{\Gamma(2-s)}{4^{s-3/2}\Gamma(s)}$ and $d_s = \frac{(4^{1-s}-1)\Gamma(1-s)}{\Gamma(1+s)}$.
\item If $u\in D((-L)^s)$ then its extension $U$ given by \eqref{eq:Usemigroup}
is the unique classical solution to the fourth order initial value problem
\begin{equation}\label{eq:uniquenessL1s2}
\begin{cases}
\big(L+\frac{3-2s}{y}\partial_y+\partial_{yy}\big)^{2}U=0&\hbox{for}~y>0\\
\lim_{y\to0}U(y)=u&\hbox{in}~X \\
\lim_{y\to0}y^{3-2s}\partial_yU(y)=0&\hbox{in}~X \\
\lim_{y\to0}\big(LU+\frac{3-2s}{y}\partial_yU+\partial_{yy}U\big)=\frac{ \Gamma(s-1)}{ \Gamma(s)}Lu&\hbox{in}~X \\
\lim_{y\to0}y^{3-2s}\partial_y\big(LU+\frac{3-2s}{y}\partial_yU+\partial_{yy}U \big)=c_s(-L)^su &\hbox{in}~X
\end{cases}
\end{equation}
and to the second order initial value problem
\begin{equation}\label{eq:uniquenessy1s2}
\begin{cases}
LU+\frac{1-2s}{y}\partial_yU+\partial_{yy}U=0&\hbox{for}~y>0\\
\lim_{y\to0}U(y)=u&\hbox{in}~X \\
\lim_{y\to0}y^{3-2s}\partial_yU(y)=0&\hbox{in}~X \\
\lim_{y\to0}\frac{2}{y}\partial_yU(y)=\frac{ \Gamma(s-1)}{ \Gamma(s)}Lu&\hbox{in}~X \\
\lim_{y\to0}y^{3-2s}\partial_y\big(\frac{2}{y}\partial_yU(y)\big)=c_s(-L)^su &\hbox{in}~X.
\end{cases}
\end{equation}
\end{enumerate}
\end{thm}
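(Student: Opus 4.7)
The plan is to follow the template of Theorem \ref{thm: extension frac domain}, but with an additional Taylor-subtraction step to compensate for the fact that the naive semigroup integral for $U$ is no longer absolutely convergent at infinity once $s \geq 1$. For part $(a)$, I would start from the regularized identity of Theorem \ref{lem: extension_epsilon_presentation},
$$U(y) = \lim_{\vep\to 0}\frac{1}{\Gamma(s)}\int_0^\infty e^{-\vep t}e^{-y^2/(4t)}e^{tL}f\,\frac{dt}{t^{1-s}},$$
split the kernel as $e^{-y^2/(4t)} = \bigl(e^{-y^2/(4t)}-1+\tfrac{y^2}{4t}\bigr)+\bigl(1-\tfrac{y^2}{4t}\bigr)$, and apply Theorem \ref{lem:inverse} with $\alpha=s$ and $\alpha=s-1$ together with the composition identity $(-L)^{-(s-1)}(-L)^s u = -Lu$ (furnished by Theorem \ref{lem:approx_identity}) to evaluate the two non-integrable pieces as $u$ and $\tfrac{y^2\Gamma(s-1)}{4\Gamma(s)}Lu$. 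The remaining integrand $e^{-y^2/(4t)}-1+y^2/(4t)$ behaves like $y^2 t^{s-2}/4$ near $t=0$ and like $y^4 t^{s-3}/32$ near $t=\infty$, both integrable against $dt/t^{1-s}$ for $1<s<2$, so dominated convergence removes $e^{-\vep t}$. The substitution $r=y^2/(4t)$ yields the second identity in \eqref{eq:U component s large}.

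For part $(b)$, starting from \eqref{eq:U component s large} I would differentiate once and change variables $r=y^2/(4t)$ to obtain
$$\tfrac{2}{y}\partial_yU = \tfrac{\Gamma(s-1)}{\Gamma(s)}Lu+\tfrac{(y^2/4)^{s-1}}{\Gamma(s)}\int_0^\infty(1-e^{-r})e^{(y^2/(4r))L}f\,\tfrac{dr}{r^s},$$
which isolates the $y$-dependence of the semigroup inside its argument. A further differentiation and multiplication by $y^{3-2s}$ collapses the explicit $y$-factors into constants, and dominated convergence combined with $\int_0^\infty(1-e^{-r})\,dr/r^s = -\Gamma(1-s)$ (one integration by parts) produces the limit $c_s f$ with the stated constant. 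The equivalence with $\lim_{y\to 0}y^{3-2s}\partial_y(LU+\tfrac{3-2s}{y}\partial_yU+\partial_{yy}U)$ is immediate from Lemma \ref{lem:recursion} applied with $a=1-2s$, $n=m=1$, which gives the pointwise identity $LU+\tfrac{3-2s}{y}\partial_yU+\partial_{yy}U = \tfrac{2}{y}\partial_yU$. For the incremental limit \eqref{eq:limincremental1s2}, I substitute \eqref{eq:U component s large} into $U(2y)-4U(y)+3u$: the $u$- and $y^2Lu$-contributions cancel algebraically, leaving only $R(2y)-4R(y)$, whose $y^{2s}$-scaled limit is computed by the same dominated-convergence argument, with constant $(1-4^{1-s})/\Gamma(s) \cdot \int_0^\infty(e^{-r}-1+r)\,dr/r^{1+s} = d_s$ after one further integration by parts. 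For the converse direction I reuse the abstract trick from Theorem \ref{thm: extension frac domain}: if $T_s$ denotes any of the three limit operators, then since $(-L)^{-s}$ is bounded and commutes with the semigroup one has $U[(-L)^{-s}u] = (-L)^{-s}U[u]$, hence $(-L)^{-s}T_s u = T_s[(-L)^{-s}u] = c_s u$ (or $d_s u$), placing $u \in R((-L)^{-s}) = D((-L)^s)$.

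Part $(c)$ is then assembly: Lemma \ref{lem:extension_equation} supplies the second-order equation and $U(0)=u$; Lemma \ref{lem:recursion} with $n=m=1$ upgrades this to the fourth-order equation; the remaining initial conditions in \eqref{eq:uniquenessL1s2} and \eqref{eq:uniquenessy1s2} are precisely the limits computed in $(b)$, together with $\lim_{y\to 0}y^{3-2s}\partial_yU(y)=0$, which drops out of the $r$-substituted expression for $\partial_y R$ whose combined prefactor reduces to $y^2$. Uniqueness in both extension problems is then Theorem \ref{cor:unique1s2}. The main obstacle throughout is the Taylor-cancellation bookkeeping in $(a)$ together with the verification, via the change of variables $r=y^2/(4t)$, that the resulting $r$-integrals are uniformly bounded for $y$ near $0$, so that the semigroup factor $e^{(y^2/(4r))L}$ can be replaced by the identity when passing to the limit; without these cancellations the integrals defining $U$ and its derivatives fail to be absolutely convergent at infinity for $s$ in this range.
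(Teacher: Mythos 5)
Your proposal is correct and follows essentially the same route as the paper: the Taylor-subtracted regularized identity combined with Theorems \ref{lem:approx_identity} and \ref{lem:inverse} for $(a)$, the identity $LU+\tfrac{3-2s}{y}\partial_yU+\partial_{yy}U=\tfrac{2}{y}\partial_yU$ from Lemma \ref{lem:recursion} together with the $(-L)^{-s}$-commutation trick for $(b)$, and Theorem \ref{cor:unique1s2} for uniqueness in $(c)$. One small caution: the second differentiation in $(b)$ should be carried out on the $t$-form of \eqref{eq:U component s large}, where $y$ enters only through the scalar kernel, with the substitution $r=y^2/(4t)$ performed afterwards as in the paper; differentiating your $r$-form directly would hit the time argument of $e^{(y^2/(4r))L}f$, which is not justified for general $f\in X$ and produces a formally divergent term --- your constant $c_s$ is nonetheless correct, since the two computations are related precisely by the integration by parts you invoke.
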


\begin{proof}
To begin with $(a)$, let $u\in D((-L)^s)$ and $f=(-L)^su$.
By Theorem \ref{lem:approx_identity},
$$u=\lim_{\varepsilon\to0}(\varepsilon I-L)^{-s}f\qquad\hbox{and}\qquad -Lu=\lim_{\varepsilon\to0}(\varepsilon I-L)^{-(s-1)}f$$
Therefore, by Theorems \ref{lem: extension_epsilon_presentation} and \ref{lem:inverse} and the dominated convergence theorem,
\begin{align*}
U(y)-u-\frac{y^2 \Gamma(s-1)}{4 \Gamma(s)}Lu
    &= \lim_{\varepsilon\to 0}\frac{1}{\Gamma(s)}\int^\infty_0 e^{-\varepsilon t}
    \bigg[e^{-y^2/(4t)}-1+\frac{y^2}{4t}\bigg]e^{tL}f\,\frac{dt}{t^{1-s}} \\
    &= \frac{1}{\Gamma(s)}\int^\infty_0\bigg[e^{-y^2/(4t)}-1+\frac{y^2}{4t}\bigg]e^{tL}f\,\frac{dt}{t^{1-s}}.
\end{align*}
This and the change of variables $r=y^2/(4t)$ give \eqref{eq:U component s large}.

For $(b)$, suppose that $u\in D((-L)^s)$ and set $f=(-L)^su$. Notice that, by Lemmas \ref{lem:extension_equation}
and \ref{lem:recursion} with $a=1-2s$ and $n=m=1$,
\begin{equation}\label{eq:Landdy}
LU+\tfrac{3-2s}{y}\partial_yU+\partial_{yy}U=\tfrac{2}{y}\partial_yU.
\end{equation}
Differentiating the first formula in \eqref{eq:U component s large} and changing variables $r=y^2/(4t)$, we obtain
$$y^{3-2s}\partial_y\big(\tfrac{2}{y}\partial_yU\big)(y)= \frac{1}{4^{s-3/2}\Gamma(s)}\int^\infty_0   e^{-r}  e^{\frac{y^2}{4r}L}f\,\frac{dr}{r^{s-1}}$$
Using this and \eqref{eq:Landdy}, we get \eqref{eq:U Ls}. Next, by \eqref{eq:U component s large},
$$\frac{U(2y) - 4U(y) + 3U(0)}{y^{2s}} = \frac{1}{4^s\Gamma(s)}\int^\infty_0
\big(e^{-r}-1+r\big)\big(4^se^{\frac{y^2}{r}L}f-4e^{\frac{y^2}{4r}L}f\big)\,\frac{dr}{r^{1+s}}.$$
We can take the limit as $y\to0$ here and get \eqref{eq:limincremental1s2}.

Let us prove the converse of $(b)$. For any $v\in X$, let $U(y)=U[v](y)$
denote its extension as in \eqref{eq:Usemigroup}. Define the linear operator
$$T_s v =\lim_{y \to 0}y^{3-2s}\partial_y\big(\tfrac{2}{y}\partial_yU[v](y)\big)=
\lim_{y\to0}y^{3-2s}\partial_y\big(LU[v](y)+\tfrac{3-2s}{y} \partial_y U[v](y) + \partial_{yy}U[v](y) \big)$$
for $v\in D(T_s)=\{v\in X: T_sv~\hbox{exists}\}$, where in the second identity we applied \eqref{eq:Landdy}. We just proved that
$D((-L)^s)\subset D(T_s)$. For the opposite inclusion, let $u \in D(T_s)$ and $g = T_s u$.
Since $(-L)^{-s}$ is a bounded linear operator that commutes with $e^{tL}$ and $(-L)^su\in D((-L)^s)$,
$$(-L)^{-s} g=(-L)^{-s}T_su=T_s[(-L)^{-s}u]=c_s(-L)^s[(-L)^{-s}u]=c_su.$$
that is, $u \in D((-L)^s)$. 
A similar argument can be done to show that the second limit in \eqref{eq:1s2incremental}
exists then $u\in D((-L)^s)$, and the details are left to the interested reader. 

For $(c)$, Lemma \ref{lem:extension_equation} and Lemma \ref{lem:recursion} give that
$U$ solves the first equations in \eqref{eq:uniquenessy1s2} and \eqref{eq:uniquenessL1s2}.
By \eqref{eq:Landdy}, it is enough to verify that $U$ satisfies the boundary conditions of \eqref{eq:uniquenessy1s2}.
Differentiating the first formula in \eqref{eq:U component s large} and changing variables,
$$\partial_yU(y)=\frac{y \Gamma(s-1)}{2 \Gamma(s)}Lu - \frac{y^{2s-1}}{4^{s-1/2}\Gamma(s)}\int^\infty_0\big( e^{-r} - 1 \big)
e^{\frac{y^2}{4r} L}f\, \frac{dr}{r^{s}}$$
so that $\lim_{y\to0}y^{3-2s}\partial_yU(y)=0$ and $\lim_{y\to0}\frac{2}{y}\partial_yU(y)=\frac{\Gamma(s-1)}{\Gamma(s)}Lu$.
The uniqueness is established in Theorem \ref{cor:unique1s2}.
\end{proof}

%%%%%%%%%%%%%%%%%%%%%%%%%%%%%%%%%%%%%%%%%%%%%%%%%%%%%%
\section{Proof of Theorem \ref{thm:extension-general}}\label{sec: extension s general}
%%%%%%%%%%%%%%%%%%%%%%%%%%%%%%%%%%%%%%%%%%%%%%%%%%%%%%

For the proof of Theorem \ref{thm:extension-general} we fix $s>0$ with integer part $[s]$.
The cases $[s]=0$ and $[s]=1$ are contained in Theorems \ref{thm: extension frac domain} and
\ref{thm:s in between 1 and 2}, respectively. Thus, we consider $[s]\geq2$.

Lemmas \ref{lem:extension_equation} and \ref{lem:recursion} with $a=1-2s$ and $m=[s]$ give $(a)$.

For $(b)$, let $u\in D((-L)^s)$ and $f=(-L)^su$. Theorem \ref{lem:approx_identity} implies that, for any $0\leq k\leq[s]$,
$$\lim_{\vep\to0} (\vep I-L)^{-(s-k)}f=\lim_{\vep\to0} (\vep I-L)^{-(s-k)} (-L)^s u = (-L)^ku .$$
Using this, the semigroup expression for $(\vep I-L)^{-(s-k)}$ given in Theorem \ref{lem:inverse}
and the limit formula for $U$ in Theorem \ref{lem: extension_epsilon_presentation}, we can write
$$U(y) - \sum^{[s]}_{k=0}\bigg(\frac{y^2}{4}\bigg)^k \frac{\Gamma(s-k)}{k!\Gamma(s)}L^k u \\
=\lim_{\vep\to0}\frac{1}{\Gamma(s)}\int_0^\infty\bigg[e^{-y^2/(4t)}-\sum_{k=0}^{[s]}\frac{(-\frac{y^2}{4t})^k}{k!}\bigg] e^{-\vep t} e^{tL}f\,\frac{dt}{t^{1-s}}.$$
The limit can be placed inside the integral sign because its integrand is bounded by
$$C(s,y,M,\|f\|_X)\bigg[\frac{\chi_{(0,1)}}{t^{1-s}}+\frac{\chi_{(1,\infty)(t)}}{t^{2+[s]-s}}\bigg]\in L^1((0,\infty),dt)$$
uniformly in $\vep>0$. This and the change of variables $r=y^2/(4t)$ give \eqref{eq:U component s Greater 2}.

To prove $(c)$ and $(d)$, let $u\in D((-L)^s)$. Since $LU+\frac{1-2s}{y}\partial_yU+\partial_{yy}U=0$,
we can apply Lemma \ref{lem:recursion} with $a=1-2s$ and $0\leq m\leq n=[s]$ to get
$$\big(L+\tfrac{1-2(s-[s])}{y}\partial_y+\partial_{yy}\big)^mU=\frac{[s]!}{([s]-m)!}\big(\tfrac{2}{y} \partial_y\big)^mU.$$
In particular, it is enough to prove that \eqref{eq:U Ls Greater 2} exists and that $U$ solves
\eqref{eq:bvpinyalls}, as uniqueness follows from Theorem \ref{thm:uniquenessgeneral}.
For this, we define the functions
$$S_{n,N}(r) = \sum^N_{k=n}\frac{(-1)^{k-n}}{(k-n)!} r^{k-n} \frac{\Gamma(s-k)}{\Gamma(s)}(-L)^k u$$
and
$$F_N(r) = e^{-r} - \sum^N_{k=0}\frac{(-r)^k}{k!}$$
for $r>0$. Then
$$U(y)= S_{0,[s]}\big(\tfrac{y^2}{4}\big)+\frac{1}{\Gamma(s)} \int^\infty_0 F_{[s]}\big(\tfrac{y^2}{4t}\big)e^{tL}(-L)^s u \, \frac{dt}{t^{1-s}}.$$
Since $S_{n,N}'(r)=-S_{n+1,N}(r)$, by induction, for any $m\geq0$,
$$\big(\tfrac{2}{y} \partial_y \big)^mS_{n,N}\big(\tfrac{y^2}{4}\big)=(-1)^mS_{n+m,N}\big(\tfrac{y^2}{4}\big).$$
Similarly, $F'_N(r)=-F_{N-1}(r)$ gives by induction that
$$\big(\tfrac{2}{y}\partial_y\big)^mF_N\big(\tfrac{y^2}{4t}\big)=\tfrac{(-1)^m}{t^m}F_{N-m}\big(\tfrac{y^2}{4t}\big).$$
With these identities and the change of variables $r=y^2/(4t)$,
\begin{align*}
\big(\tfrac{2}{y}\partial_y\big)^mU(y)
&= (-1)^mS_{m,[s]}\big(\tfrac{y^2}{4}\big)+\frac{(-1)^m}{\Gamma(s)} \int^\infty_0 F_{[s]-m}\big(\tfrac{y^2}{4t}\big)e^{tL}(-L)^su\,\frac{dt}{t^{1-s+m}} \\
&= (-1)^mS_{m,[s]}\big(\tfrac{y^2}{4}\big)+\frac{(-1)^my^{2(s-m)}}{4^{s-m}\Gamma(s)} \int^\infty_0F_{[s]-m} (r)e^{\frac{y^2}{4r}L}(-L)^s u\,\frac{dr}{r^{1+s-m}}.
\end{align*}
From dominated convergence, it follows that, for any $0\leq m\leq [s]$,
$$\lim_{y\to0}\big(\tfrac{2}{y}\partial_y\big)^mU(y)=\frac{\Gamma(s-m)}{\Gamma(s)}L^mu$$
as the integral term vanishes in the limit. One can also easily see that, for $0\leq m<[s]$,
$$2\lim_{y\to0}y^{1-2(s-[s])}\partial_y\big(\tfrac{2}{y}\partial_y\big)^mU(y)=
\lim_{y\to0}y^{2-2(s-[s])}\big(\tfrac{2}{y}\partial_y\big)^{m+1}U(y)=0.$$
Finally, when $m=[s]$,
$$\big(\tfrac{2}{y}\partial_y\big)^{[s]}U(y)=\frac{\Gamma(s-[s])}{\Gamma(s)}(-L)^{[s]}u+
\frac{(-1)^{[s]}}{\Gamma(s)}\int_0^\infty\big(e^{-y^2/(4t)}-1\big)e^{tL}(-L)^su\,\frac{dt}{t^{1-(s-[s])}}.$$
We can differentiate inside the integral sign and change variables $r=y^2/(4t)$ to get
$$\partial_y\big(\tfrac{2}{y}\partial_y\big)^{[s]}U(y)=\frac{(-1)^{[s]+1}y^{-1+2(s-[s])}}{4^{s-([s]+1/2)}\Gamma(s)}
\int_0^\infty e^{-r}e^{\frac{y^2}{4t}L}(-L)^su\,\frac{dr}{r^{s-[s]}}.$$
By using dominated convergence, we conclude that
$$\lim_{y\to0}y^{1-2(s-[s])}\partial_y\big(\tfrac{2}{y}\partial_y\big)^{[s]}U(y)=
\frac{(-1)^{[s]+1}\Gamma([s]+1-s)}{4^{s-([s]+1/2)}\Gamma(s)}(-L)^su.$$
This finishes the proof of $(d)$ and the direct statement of $(c)$.

For the converse of $(c)$, given $v\in X$, let $U[v](y)$ denote its extension as in \eqref{eq:Usemigroup}.
Define the linear operator
\begin{align*}
T_sv &= \lim_{y\to0}y^{1-2(s-[s])}\partial_y\big(\big(\tfrac{2}{y}\partial_y\big)^{[s]} U[v](y) \big) \\
&=\frac{1}{[s]!}\lim_{y\to0}y^{1-2(s-[s])}\partial_y\big(\big(L+\tfrac{1-2(s-[s])}{y}\partial_y+\partial_{yy}\big)^{[s]}U[v](y)\big)
\end{align*}
with domain $D(T_s)=\big\{v\in X:T_sv~\hbox{exists}\big\}$. We already know that $D((-L)^s)\subset D(T_s)$.
Let $u\in D(T_s)$ and set $g=T_su$. Then, using the continuity of $(-L)^{-s}$,
\begin{align*}
(-L)^{-s}g &= \lim_{y\to0}y^{1-2(s-[s])}\partial_y \big( \big(\tfrac{2}{y}\partial_y\big)^{[s]} U[(-L)^su](y) \big) \\
&= c_s(-L)^s(-L)^{-s}u=c_su.
\end{align*}
Hence, $u\in D((-L)^s)$.\qed

%%%%%%%%%%%%%%%%%%%%%%%%%%%%%%%%%%%%%%%%%%%%%%%%%%%%%%

%%%%%%%%%%%%%%%%%%%%%%%%%%%%%%%%%%%%%%%%%%%%%%%%%%%%%%

%%%%%%%%%%%%%%%%%%%%%%%%%%%%%%%%%%%%%%%%%%%%%%%%%%%%%%
\end{document}